\documentclass[12pt]{article}%
   
\usepackage{amsmath,enumerate}
\usepackage{amsfonts}
\usepackage{amssymb}

\usepackage{tikz}

\usepackage{lipsum}
\usepackage{pgfplots}
\pgfplotsset{colormap={coloronemap}{rgb=(.4,.4,1); rgb=(.8,.8,1)}}
\pgfplotsset{colormap={colortwomap}{rgb=(1,.4,.4); rgb=(1,.8,.8)}}
\usepackage{eso-pic,calc}
\usepackage[font=small]{caption}
\usepgfplotslibrary{external}
\usetikzlibrary{calc}
\usetikzlibrary{shadings}
\usepgflibrary{shapes.geometric}

\pgfplotsset{my style/.append style={axis x line=middle, axis y line=middle, xlabel={$x$}, ylabel={$y$}, axis equal}}

\setlength{\topmargin}{-.5in}
\setlength{\textheight}{9in}
\setlength{\oddsidemargin}{.125in}
\setlength{\textwidth}{6.25in} 

\setcounter{MaxMatrixCols}{30}
\newtheorem{theorem}{Theorem}[section]

\newtheorem{claim}[theorem]{Claim}

\newtheorem{conjecture}[theorem]{Conjecture}

\newtheorem{defin}[theorem]{Definition}

\newtheorem{lemma}[theorem]{Lemma}

\newtheorem{proposition}[theorem]{Proposition}

\newtheorem{question}[theorem]{Question}

\newenvironment{proof}[1][Proof]{\noindent\textbf{#1.} }
{\hfill \ \rule{0.5em}{0.5em}}


\begin{document}

\title{Triangles in $K_s$-saturated graphs with minimum degree $t$}
\author{Benjamin Cole\thanks{Department of Mathematics and Statistics, 
California State University, Sacramento, \texttt{benjamincole@csus.edu}} 
\and 
Albert Curry\thanks{Department of Mathematics and Statistics,
California State University, Sacramento, \texttt{albertcurry@csus.edu}}
\and 
David Davini\thanks{Department of Mathematics and Statistics,
University of California, Los Angeles, \texttt{daviddavini@ucla.edu}}
\and 
Craig Timmons\thanks{Department of Mathematics and Statistics, California State University, Sacramento.
Research is supported by a grant from the Simons Foundation \#359419.}}
\date{}

\maketitle

\vspace{-2em}

\begin{abstract}
For $n \geq 15$, we prove that the minimum number of triangles in an $n$-vertex 
$K_4$-saturated graph with minimum degree 4 is exactly $2n-4$, and that there is a unique extremal
graph. 
This is a triangle version of a result of 
Alon, Erd\H{o}s, Holzman, and Krivelevich from 1996.
 Additionally, we show that for any $s > r \geq 3$ and $t \geq 2 (s-2)+1$, there
 is a $K_s$-saturated $n$-vertex graph with minimum degree $t$ that has 
 $\binom{ s-2}{r-1}2^{r-1} n + c_{s,r,t}$ copies of $K_r$.  This shows that 
 unlike the number of edges, the number 
 of $K_r$'s ($r >2$) in a $K_s$-saturated graph 
 is not forced to grow with the minimum degree, except for possibly in lower order terms. 
\end{abstract}


\section{Introduction}

Let $F$ be a graph.  A graph $G$ is \emph{$F$-free} if $G$ does not contain $F$ as a subgraph.  
A graph $G$ is \emph{$F$-saturated} if $G$ is $F$-free, 
and adding a new edge to $G$ creates a copy of $F$.
The minimum number of edges in an $F$-saturated graph with 
$n$ vertices is called the \emph{saturation number of $F$}.
Write $\textup{sat}(n , F)$ for this minimum, so 
\[
\textup{sat}(n ,F) = \min \{ | E(G) | : \mbox{$G$ has $n$ vertices and is $F$-saturated} \}.
\]
An $n$-vertex $F$-saturated
graph with $\textup{sat}(n,F)$ edges is called an \emph{extremal graph}.  


\subsection{History and Previous Results}

One of the most important results on graph saturation 
is that for any graph $F$ with at least one edge, there is a constant $C_F$ such that 
$\textup{sat}(n , F) \leq C_F n$.  This was proved by K\'{a}szonyi and Tuza in 1986 \cite{kt},
and shows that saturation numbers are linear in $n$.      
Since then, the study of saturation has become an established branch 
of extremal graph theory.   
Saturation numbers of hypergraphs and of random graphs have been studied 
as well \cite{bol65, ks, pikhurko, pikhurko2}.
The survey paper of Faudree, Faudree, and Schmitt \cite{sat survey} contains 
many results and references. 

For complete graphs, Erd\H{o}s, Hajnal, and Moon \cite{ehm} proved that for $s \geq 3$,
\[
\textup{sat}(n , K_s) = (s-2) ( n - s +2) + \binom{s-2}{2}.
\]
Furthermore, there is a unique extremal graph which is, up to isomorphism, 
$K_{s-2} + \overline{K_{n - s + 2} }$ (the join of a clique with $s-2$ vertices
and an independent set with $n-s+2$ vertices).
This graph has 
minimum degree 
$s-2$, and the minimum degree of a $K_s$-saturated graph is at least $s-2$ (since nonadjacent vertices must have 
a $K_{s-2}$ in their common neighborhood).  
A natural question
is to ask for the minimum number of edges in an $F$-saturated graph $G$
with $\delta (G) = t$ where $t > s-2$.  
Given a graph $F$ and 
an integer $t$, let
\[
\textup{sat}_t ( n ,F) = 
 \min \{ | E(G) | : \mbox{$G$ has $n$ vertices, is $F$-saturated, and has $\delta (G) = t$ } \}.
\]
Duffus and Hanson \cite{dh} proved that $\textup{sat}_2 (n , K_3) = 2n -5$ for $n \geq 5$, and  
characterized the extremal graphs. They also showed that 
$\textup{sat}_3 (n , K_3) = 3n -15$ for $n \geq 10$.  
For larger $t$ and $s$, the bounds are not exact.  
In 2014, Day \cite{day}, resolving a conjecture of Bollob\'{a}s \cite{graham} from 1996, 
showed that 
\begin{equation}\label{equation day}
\textup{sat}_t  (n , K_s) \geq t n - c_{t}
\end{equation}
for any $s \geq 3$, $t \geq s -2$.  Here $c_{t}$ is a constant depending only on $t$.
Further discussion on saturated graphs with degree constraints can be found 
in \cite{sat survey}. 

In 2014,  Alon and Shikhelman \cite{as}
introduced a very important generalization of Tur\'{a}n numbers which has since 
been extensively studied (\cite{ergemlidze, gerbner, shapira, ma} to name a few).  
It is connected to the widely studied Tur\'{a}n problem
for Berge hypergraphs (\cite{gmv, pttw}, for instance).  Motivated by this generalization,   
Kritschgau et.\ al.\ \cite{kmtt} defined an analogous generalization of saturation numbers. 
For graphs $H$ and $F$, let
$\textup{sat}(n , H , F)$
be the minimum number of copies of $H$ in an $F$-saturated graph
with $n$ vertices. Observe that $\textup{sat}(n , K_2 ,F) = \textup{sat}(n , F)$.
In this paper, we introduce the function
\[
\textup{sat}_t (n , H , F)
\]
which is defined to be the minimum number of copies of $H$ in an $n$-vertex 
$F$-saturated graph with minimum degree $t$.  
This function generalizes both $\textup{sat}_t (n,F)$ and $\textup{sat}(n, H , F)$.
The question we put forth is the following.

\begin{question}\label{question 1}
Let $H$ and $F$ be graphs.  What is the minimum number 
of copies of $H$ in an $F$-saturated $n$-vertex graph with minimum degree
$t$?  
\end{question}
 
Before stating our results, let us recall a result from \cite{kmtt} 
which we take as a starting point for our work.  
In \cite{kmtt}, the formula  
\begin{equation}\label{exact bound k3 k4}
\textup{sat}(n,K_3,K_4) = n-2
\end{equation}
was proved, and it was shown that $K_2 + \overline{K_{n-2}}$ is the unique extremal graph.  
 Bounds on $\textup{sat}(n,K_r,K_s)$ for 
 all $s > r \geq 3$ were also proved in \cite{kmtt}.  However, 
 an asymptotic formula for $\textup{sat}(n,K_r,K_s)$ is known only
 in the case when $(r,s) = (3,4)$.    

Motivated by the fact that (\ref{exact bound k3 k4}) gives a formula for
 $\textup{sat}(n , K_3 , K_4)$, 
 we will study the function $\textup{sat}_t (n , K_3 , K_4)$ in detail.  
 Our aim is to determine whether or not a statement similar to (\ref{equation day}) holds
 when counting $K_3$'s in a $K_4$-saturated graph with minimum degree at least $t$.  First
let us state some results that answer Question \ref{question 1}
in certain cases.  The first proposition we give is easy to prove using the fact
that in a $K_s$-saturated graph, any two nonadjacent vertices must have 
a $K_{s-2}$ in their common neighborhood.

\begin{proposition}\label{proposition 1}
Let $s > r  \geq 2$ be integers.  If $G$ is an $n$-vertex $K_s$-saturated graph with 
$\delta (G) = s-2$, then $G$ is isomorphic to  $K_{s-2} + \overline{ K_{n-s+2}}$ and consequently, has
\[
\binom{s-2}{r} + (n-s+2) \binom{s-2}{r-1}
\]
copies of $K_r$.  
\end{proposition}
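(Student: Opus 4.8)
The plan is to first extract the structure of $G$ from the degree and saturation hypotheses, and then count cliques in the resulting graph. Fix a vertex $v$ with $\deg_G(v) = s-2$ (one exists since $\delta(G) = s-2$), and set $S = N(v)$, so $|S| = s-2$. For any vertex $u \notin S \cup \{v\}$ (such a $u$ exists whenever $n \geq s$; the degenerate range $n \leq s-1$ forces $G = K_{n}$ and is checked directly), the pair $u,v$ is nonadjacent, so by $K_s$-saturation adding the edge $uv$ creates a copy of $K_s$ using that edge. The remaining $s-2$ vertices of this $K_s$ form a $K_{s-2}$ contained in $N(u) \cap N(v) \subseteq S$. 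Since $|S| = s-2$, this forces $N(u) \cap N(v) = S$ and, crucially, that $S$ itself induces a clique $K_{s-2}$.

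Next I would note that this already shows every vertex of $V(G) \setminus S$ is adjacent to all of $S$: for $v$ this is the definition of $S$, and for every other vertex it is the containment $N(u) \cap N(v) = S$ from the previous step. Thus $S$ is a $K_{s-2}$ completely joined to $V(G) \setminus S$, and it only remains to see that $V(G) \setminus S$ is independent. If $x,y \in V(G) \setminus S$ were adjacent, then $\{x,y\} \cup S$ would induce a $K_s$ in $G$, contradicting $K_s$-freeness. Hence $G \cong K_{s-2} + \overline{K_{n-s+2}}$.

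Finally, I would count copies of $K_r$ in $K_{s-2} + \overline{K_{n-s+2}}$. Because the part $\overline{K_{n-s+2}}$ contains no edge, every clique of $G$ meets it in at most one vertex, so a $K_r$ is of exactly one of two types: it lies entirely inside the $K_{s-2}$ part, contributing $\binom{s-2}{r}$ copies; or it consists of one vertex of the independent part together with an $(r-1)$-clique of the $K_{s-2}$ part, contributing $(n-s+2)\binom{s-2}{r-1}$ copies. Adding these gives $\binom{s-2}{r} + (n-s+2)\binom{s-2}{r-1}$, as claimed.

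I expect no real obstacle here: the only mild bookkeeping point is handling small $n$ (guaranteeing a non-neighbor of $v$, or observing that $G$ is a complete graph when $n = s-1$, which still matches the formula). The one genuine idea is that $|N(v)| = s-2$ combined with the saturation condition pins $N(v)$ down as a $K_{s-2}$, after which the entire structure of $G$ — and hence the clique count — is forced.
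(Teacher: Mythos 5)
Your proof is correct and takes essentially the same route the paper indicates: the paper only sketches the argument, noting that in a $K_s$-saturated graph any two nonadjacent vertices have a $K_{s-2}$ in their common neighborhood, and applying this at a vertex of degree $s-2$ — exactly what you do — pins down $N(v)$ as a clique joined to an independent set, after which the count of $K_r$'s is immediate. Your explicit treatment of the degenerate case $n=s-1$ is a harmless extra detail.
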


In the special case $(r,s)  = (3,4)$ and $\delta (G) = 2$, we have exactly $n-2$ triangles.
A close look at the proof of (\ref{exact bound k3 k4}) in \cite{kmtt} shows that 
if $G$ is not isomorphic to $K_2 + \overline{K_{n-2}}$, then $G$ has at 
least $n$ triangles.  Now if $G$ is $K_4$-saturated and is not isomorphic to 
$K_2 + \overline{K_{n-2}}$, then one can prove that $G$ 
has minimum degree at least 3.  
Thus, we see a small jump in the number of triangles when $G$ is no longer allowed 
to have a vertex of degree 2.  It turns out that when
the minimum degree is 3, we do not just get 2 additional triangles, but we must get
and additional $n - 5$ triangles.
Before formalizing this as a proposition, we need to introduce a new graph.  

Let $a_1a_2a_3a_4a_5 a_1$ be a cycle of length 
five and let $b$ be a vertex joined to each $a_i$.  Call this graph $W$. 
Let $m_1,m_3,m_4$ be positive integers 
with $m_1 +m_3 + m_4 = n - s  +1$.  In $W$, replace $b$ with a clique of size $s-3$, and for
$i \in \{1,3,4 \}$, replace $a_i$ with an independent set of size $m_i$.  Vertices in this new graph 
are adjacent if and only if the vertices they replaced are adjacent vertices in $W$.  
Write $W_s(m_1 , 1 , m_3 , m_4 , 1)$ for this graph, which has appeared in the literature (see \cite{afg}).  
 
\begin{proposition}\label{proposition 2}
Let $s  \geq 3$.  If $G$ is an $n$-vertex $K_s$-saturated  
graph with $\delta (G) =s-1$, then $G$ is isomorphic to either $(K_{s-1} -e ) + \overline{K_{n-s+1}}$,
or $W_s (m_1 , 1 , m_3,m_4,1)$ for some $m_1,m_3,m_4$ with $m_1 + m_3 + m_4 = n-s+1$.
\end{proposition}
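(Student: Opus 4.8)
The plan is to peel off a universal clique of size $s-3$ and reduce to triangle-saturated graphs of minimum degree two. We may assume $n \ge s+1$: if $n \le s-1$ then $\delta(G) \le n-1 < s-1$ and no such $G$ exists, while if $n = s$ then $\delta(G) = s-1 = n-1$ forces $G \cong K_s$, which is not $K_s$-free. So fix a vertex $v$ with $\deg(v) = s-1$ and write $N(v) = \{x_1,\dots,x_{s-1}\}$. Since $G$ is $K_s$-free, $G[N(v)]$ is $K_{s-1}$-free; and since $\deg(v) = s-1 < n-1$, some vertex $w$ is non-adjacent to $v$, so by $K_s$-saturation $N(v)\cap N(w)$ contains an $(s-2)$-clique. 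Hence $G[N(v)]$ has clique number exactly $s-2$. Fix a maximum clique $Q \subseteq N(v)$ of size $s-2$, write $\{x_1\} = N(v)\setminus Q$, and let $d$ be the number of vertices of $Q$ not adjacent to $x_1$; then $d \ge 1$, since $d = 0$ would make $N(v)$ an $(s-1)$-clique, impossible.

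The crux is to prove $d = 1$. Suppose $d \ge 2$. Then $Q$ is the \emph{only} $(s-2)$-clique inside $G[N(v)]$: any other would consist of $x_1$ together with $s-3$ of its neighbours in $Q$, but $x_1$ has only $s-2-d \le s-4$ such neighbours. Therefore every vertex $w \notin \{v\}\cup N(v)$ has $Q \subseteq N(w)$, and combined with $v$ and the clique $Q$ itself this shows that every vertex of $Q$ is adjacent to every vertex of $G$ other than itself and possibly $x_1$; in particular, choosing $x_2 \in Q$ with $x_1 \not\sim x_2$, we get $N(x_2) = V(G)\setminus\{x_1,x_2\}$, so $N(x_1)\cap N(x_2) = N(x_1)$. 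By saturation at the non-edge $x_1x_2$ there is an $(s-2)$-clique $C \subseteq N(x_1)$. It cannot contain $v$, since a clique of $N(x_1)$ through $v$ lies in $\{v\}\cup(Q\cap N(x_1))$, which is too small; and $C$ meets $Q$ in at most $|Q\cap N(x_1)| = s-2-d$ vertices, so at least $d$ vertices of $C$ lie outside $\{v\}\cup N(v)$, forming a clique $Y$ of size $d$ that avoids $x_1$. But then every vertex of $Q$ is adjacent to every vertex of $Y$, so $Q\cup Y$ is a clique of size $(s-2)+d \ge s$, contradicting $K_s$-freeness. Hence $d = 1$, i.e.\ $G[N(v)] \cong K_{s-1}-e$ with missing edge $x_1x_2$ ($x_2 \in Q$). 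Setting $A = Q\setminus\{x_2\}$, a clique of size $s-3$, the two $(s-2)$-cliques $A\cup\{x_1\}$ and $A\cup\{x_2\}$ of $G[N(v)]$ both contain $A$, so every vertex non-adjacent to $v$ is adjacent to all of $A$; since $v,x_1,x_2$ are too, $A$ is a universal clique and $G = K_{s-3} + H$ with $H := G - A$ on $n-s+3$ vertices.

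It remains to understand $H$. Because $A$ is universal, $G$ is $K_s$-free iff $H$ is triangle-free, and a non-edge of $G$ (which lies in $V(H)$) completes a $K_s$ iff the corresponding non-edge of $H$ completes a triangle; so $H$ is $K_3$-saturated. Also $\deg_H(v) = (s-1)-(s-3) = 2$ and $\delta(H) \ge \delta(G)-(s-3) = 2$, so $\delta(H) = 2$. Now I classify triangle-saturated graphs $H$ with a vertex $v$ of degree two, $N_H(v) = \{x_1,x_2\}$ (so $x_1\not\sim x_2$). Any other vertex is non-adjacent to $v$, hence shares a neighbour with $v$ by saturation, so it is adjacent to $x_1$ or $x_2$; thus $V(H) = \{x_1,x_2\}\cup N(x_1)\cup N(x_2)$. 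Put $M_1 = N(x_1)\cap N(x_2)\ (\ni v)$, $M_3 = N(x_1)\setminus N(x_2)$, $M_4 = N(x_2)\setminus N(x_1)$. Triangle-freeness makes each $M_i$ independent and kills all $M_1$--$M_3$ and $M_1$--$M_4$ edges; a non-edge between $M_3$ and $M_4$ would have no common neighbour, so saturation forces $M_3$ complete to $M_4$. If exactly one of $M_3,M_4$ is empty, a vertex on the non-empty side has degree one, impossible; so either $M_3 = M_4 = \emptyset$, whence $H = K_{2,|M_1|}$ and $G \cong (K_{s-1}-e)+\overline{K_{n-s+1}}$, or $m_1 := |M_1|,\ m_3 := |M_3|,\ m_4 := |M_4|$ are all positive, in which case identifying $(M_1,x_1,M_3,M_4,x_2)$ with $(a_1,a_2,a_3,a_4,a_5)$ shows $H \cong W_3(m_1,1,m_3,m_4,1)$, hence $G = K_{s-3}+H \cong W_s(m_1,1,m_3,m_4,1)$, with $m_1+m_3+m_4 = (n-s+3)-2 = n-s+1$.

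The main obstacle is the middle step, showing $d=1$: everything else is either bookkeeping or the standard local structure forced by $K_s$-saturation, but ruling out $x_1$ having two or more non-neighbours in $Q$ needs the extra idea of applying saturation to the non-edge $x_1x_2$, extracting a fresh $(s-2)$-clique that avoids $v$, and splicing it onto $Q$ to manufacture a forbidden $K_s$. The classification of triangle-saturated graphs of minimum degree two in the last paragraph is routine but must be carried out carefully to be exhaustive.
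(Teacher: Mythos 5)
Your argument is correct, and I checked it in detail: the uniqueness of the $(s-2)$-clique $Q$ in $G[N(v)]$ when $d\ge 2$, the splicing of the fresh $(s-2)$-clique $C\subseteq N(x_1)\cap N(x_2)=N(x_1)$ onto $Q$ to force a $K_{\,s-2+d}\supseteq K_s$ (the genuine crux), the resulting decomposition $G=K_{s-3}+H$ with $H$ a $K_3$-saturated graph of minimum degree $2$, and the exhaustive classification of such $H$ into $K_{2,m_1}$ and the blown-up $C_5$, which match $(K_{s-1}-e)+\overline{K_{n-s+1}}$ and $W_s(m_1,1,m_3,m_4,1)$ respectively (note $m_1\ge 1$ automatically since $v\in M_1$, and the degenerate case $s=3$, where $A=\emptyset$ and $d\ge 2$ is vacuous, goes through unchanged). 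There is nothing in this paper to compare against line by line: the authors do not prove Proposition 2 here but refer the reader to the Kritschgau--Methuku--Tait--Timmons paper \cite{kmtt}, whose treatment of minimum-degree-$(s-1)$ saturated graphs proceeds from the same starting point (the neighborhood of a degree-$(s-1)$ vertex must have clique number exactly $s-2$). What your write-up buys in particular is self-containedness: after peeling off the universal $K_{s-3}$, the final paragraph amounts to an independent re-derivation of the Duffus--Hanson-type structure of triangle-saturated graphs containing a degree-two vertex, rather than an appeal to the literature, and every case ($M_3=M_4=\emptyset$, exactly one empty, both nonempty) is accounted for.
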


It follows from Proposition \ref{proposition 2} that if $G$ is an $n$-vertex $K_4$-saturated graph
with minimum degree 3, then $G$ has at least $2n-7$ triangles, and equality holds
only if $G$ is isomorphic to $W_4(m_1,1,m_3,1,1)$ for some
$m_1 + m_3 = n -4$.  For a proof of Proposition \ref{proposition 2}, see \cite{kmtt}.  


\subsection{New Results}

In light of Day's Theorem (\ref{equation day}) and that
\begin{itemize}
\item $|V(G)| = n $, $G$ is $K_4$-saturated, and $\delta (G) = 2$ ~~ $\Rightarrow$ ~~ $G$ has at least $n-2$ triangles,
\item $|V(G)| = n $,~$G$ is $K_4$-saturated, and $\delta (G) = 3$ ~~ $\Rightarrow$ ~~ $G$ has at least $2n-7$ triangles,
\end{itemize}
one may be tempted to conjecture that in general, $\delta (G) = t$ forces at least $(t-1)n - O(1)$ triangles in 
any $n$-vertex $K_4$-saturated graph.  This would then give a version of
 Day's result for triangles.

It turns out, rather surprisingly, that
for any $t \geq 4$ and $n \geq t +5$, there is an $n$-vertex $K_4$-saturated graph 
that has minimum degree $t$ and only $2n+2t-12$ triangles.
We call this graph $H_t(n)$ and it is defined in Section \ref{section 2}. 
Our main theorem determines $\textup{sat}_t (n , K_3 , K_4)$, and shows $H_t(n)$ is the unique extremal graph.

\begin{theorem}\label{main theorem}
Let $n \geq 14$ and $G$ be an $n$-vertex $K_4$-saturated graph
with $\delta (G) = 4$.
Then $G$ contains at least $ 2n -4$ triangles.  
Furthermore, if $G$ contains exactly $ 2n-4$ triangles, 
then $G$ is isomorphic to $H_t(n)$.
\end{theorem}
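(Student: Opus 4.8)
Write $T$ for the number of triangles of $G$. Two standard facts will be used throughout. Since $G$ is $K_4$-free, $G[N(v)]$ is triangle-free for every vertex $v$, the number of triangles through $v$ equals $e(G[N(v)])$, and hence $3T=\sum_{v}e(G[N(v)])$. Since $G$ is $K_4$-saturated, for every edge $xy$ the common neighbourhood $N(x)\cap N(y)$ is independent, and every non-edge $uv$ has a \emph{certificate}: an edge of $G$ with both endpoints in $N(u)\cap N(v)$. A short consequence, using $n\ge 14$, is that every vertex of $G$ lies in a triangle.

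The main step is a stability statement: $G$ contains an edge $uv$ for which $Z:=N(u)\cap N(v)$ misses only a bounded number of vertices. To begin it, choose $w$ with $\deg w=4$. Then $G[N(w)]$ is triangle-free on four vertices, so it has between $1$ and $4$ edges, and each of the $n-5$ non-neighbours of $w$ is joined to both endpoints of some edge of $G[N(w)]$; pigeonholing over these $\le 4$ edges yields an edge $xy$ with $|N(x)\cap N(y)|\ge \lceil (n-5)/4\rceil+1$, and already $|N(x)\cap N(y)|\ge n-4$ when $e(G[N(w)])=1$. Upgrading ``linear common neighbourhood'' to ``cofinite common neighbourhood'' is the crucial part: one exploits that $S:=N(x)\cap N(y)$ is an independent set of linear size whose vertices each have at least two neighbours outside $\{x,y\}$, that the non-edges between $S$ and its complement must all be certified, and that the identity $3T=\sum_v e(G[N(v)])$ limits how many vertices can have large degree; a case analysis organised by $e(G[N(w)])$ and by the local picture around $S$ then forces all but boundedly many vertices into one common neighbourhood. \textbf{I expect this reduction to be by far the hardest part}, and it is where the hypothesis $n\ge 14$ is spent on small sporadic graphs.

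Granting the reduction, put $R:=V(G)\setminus Z$, so $u,v\in R$, $|R|$ is bounded, and $|Z|=n-|R|$. Since $u\sim v$ and $Z\subseteq N(u)\cap N(v)$, the set $Z$ is independent, every neighbour of a vertex of $Z$ lies in $R$, and each $z\in Z$ is in the triangle $uvz$ and has at least two neighbours in $R$. One first observes that $e(G[N(z)])\ge 2$ for every $z\in Z$: if $e(G[N(z)])=1$ then $G[N(z)]$ --- a subgraph of $G[R]$ containing the edge $uv$ --- would carry only that edge, forcing some vertex of $R\setminus\{u,v\}$ to have all its neighbours inside the independent set $Z$, hence to lie in no triangle, a contradiction. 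Now bound $3T=\sum_v e(G[N(v)])$ from below. Because $v$ (resp.\ $u$) is adjacent to all of $Z$, one has $e(G[N(u)])\ge |Z|$ and $e(G[N(v)])\ge |Z|$, and $\sum_{z\in Z}e(G[N(z)])\ge 2|Z|$. For each $z$ the graph $G[N(z)]$ also has a second edge besides $uv$, an edge of the bounded graph $G[R]$; taking $u'v'$ to be such a second edge seen by as many $z$ as possible, and using that $K_4$-freeness forbids two ``disjoint'' heavily used second edges, one finds that the endpoints of the used second edges contribute at least $2|Z|$ more to $\sum_v e(G[N(v)])$. Adding the three contributions of size $2|Z|$ and accounting for the finitely many possibilities for $G[R]$ then yields $3T\ge 6n-12$, i.e.\ $T\ge 2n-4$.

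For the characterisation, equality forces every estimate above to be tight: $|R|$ equals the size of the core of $H_t(n)$, $e(G[N(u)])=e(G[N(v)])=|Z|$, every $z\in Z$ lies in exactly two triangles so that $G[N(z)]$ is a pair of disjoint edges, and each vertex of $R$ contributes exactly as it does in $H_t(n)$. Together with $K_4$-saturation and $K_4$-freeness, these rigid conditions determine $G[R]$ and the attachment of $Z$ to $R$ uniquely, and one checks the resulting graph is $H_t(n)$ from Section~\ref{section 2}.
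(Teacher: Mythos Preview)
Your strategy is genuinely different from the paper's. The paper fixes a vertex $x$ of minimum degree~4, splits into cases according to whether $G[N(x)]$ has 2, 3, or 4 edges, and in each case partitions $Y=V(G)\setminus N[x]$ into sets $V_S$ (indexed by $S\subseteq\{1,2,3,4\}$), derives ``Rules'' constraining adjacencies among the $V_S$, identifies a small seed subgraph $G_i'$, and then counts how many new triangles each added vertex creates. There is no global stability step; the bound and the uniqueness of $H_4(n)$ both fall out of the 2-edge case (Lemma~\ref{lemma 2 edges}), while the 3- and 4-edge cases give strictly more than $2n-4$.

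Your proposal, by contrast, hinges on a structural reduction --- that some edge $uv$ has $|N(u)\cap N(v)|\ge n-C$ --- which you explicitly do not prove. This is a real gap, not a routine omission: the case analysis you would need to carry out to establish it (organised, as you say, by $e(G[N(w)])$) is essentially the paper's entire proof, and it is not clear the statement is even true in the 4-edge $C_4$ case with all of $V_{12},V_{23},V_{34},V_{41}$ nonempty. In that regime the paper never proves any cofinite-common-neighbourhood structure; it simply counts triangles directly (Claim~\ref{X claim}).

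Even granting your reduction, the arithmetic after it does not close. In $H_4(n)$ the best edge ($a_1a_2$, say) has $|Z|=n-6$ and $|R|=6$. Your three ``contributions of size $2|Z|$'' give only $6|Z|=6n-36$ towards $3T$, so you are $24$ short of $6n-12$. That deficit is made up in the actual graph by slack in each of your inequalities (e.g.\ $e(G[N(a_i)])=n-4=|Z|+2$, not $|Z|$) together with the contributions of the two remaining vertices of $R$; your phrase ``accounting for the finitely many possibilities for $G[R]$'' does not explain why every admissible $G[R]$ supplies at least $6|R|-12$ extra, nor why $|R|$ cannot exceed~6. The uniqueness argument inherits the same looseness: you assert that equality forces $|R|$ to match the core of $H_4(n)$, but nothing you have written pins $|R|$ down.
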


For $t \geq 4$, the graph $H_t (n)$ implies the following upper bound on $\textup{sat}_t (n , K_3 , K_4)$.

\begin{theorem}\label{H proposition}
For integers $t \geq 4$ and $n \geq 2t$, 
\[
\textup{sat}_t (n , K_3 , K_4) \leq 2n + 2t -12.
\]
\end{theorem}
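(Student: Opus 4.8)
The statement is an upper bound on a minimum, so it suffices to exhibit a single $n$-vertex $K_4$-saturated graph of minimum degree $t$ with at most $2n+2t-12$ triangles, and the plan is to show that the graph $H_t(n)$ from Section~\ref{section 2} does this. One has to check that $H_t(n)$ has $n$ vertices, is $K_4$-free, is $K_4$-saturated, has minimum degree $t$, and has exactly $2n+2t-12$ triangles. The graph is assembled from a \emph{core} of bounded size, on the order of $t$ vertices, enlarged in a structured way by a part whose size is linear in $n$ (for instance, by blowing up some vertices of the core into independent sets). Given the construction, the vertex count is immediate, and the minimum degree is attained by the vertices of the enlarged part; the role of the hypothesis $n\ge 2t$ is to force that part to be large enough that no core vertex drops below degree $t$.

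The $K_4$-freeness and the triangle count are then routine enumerations. For $K_4$-freeness one checks that the core contains no $K_4$ and that the enlargement creates none: a $K_4$ using two copies of a blown-up vertex is impossible since such copies are nonadjacent, and a $K_4$ using one copy would force a triangle among its neighbours inside the core. For the triangles, one separates those contained in the core from those meeting the enlarged part; the former contribute a constant, while each vertex of the enlarged part lies in a fixed number of triangles (which the leading term $2n$ forces to be two), for a contribution of $2n+O(1)$. Collecting terms gives $2n+c$, and one checks that $c=2t-12$.

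The substantive step is $K_4$-saturation: for every non-edge $uv$ of $H_t(n)$ one must produce an edge inside $N(u)\cap N(v)$, so that adding $uv$ creates a $K_4$. I would split this by where $u$ and $v$ lie --- both in the enlarged part, one in it and one in the core, or both in the core --- and in each case exhibit a concrete edge in the common neighbourhood, the finitely many subcases being governed by the adjacency patterns to the core. I expect the main obstacle to be the \emph{cross} non-edges, with one endpoint in the enlarged part and the other a core vertex it is not adjacent to: there $N(u)\cap N(v)$ is confined to the (small) neighbourhood of the enlarged vertex, so the core must be arranged so that this neighbourhood still meets the core vertex in an edge. Balancing this requirement against $K_4$-freeness and against the requirement that each enlarged vertex lie in only two triangles is exactly what the definition of $H_t(n)$ in Section~\ref{section 2} accomplishes; once that definition is recalled, each of the checks above is a finite verification, so the proof is to recall $H_t(n)$ and then run through the cases.
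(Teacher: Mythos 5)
There is a genuine gap: your write-up never actually produces the witness graph. The theorem statement does not define $H_t(n)$ --- its construction \emph{is} the paper's proof --- so a blind proof cannot ``recall $H_t(n)$ from Section~\ref{section 2} and run through the cases''; it must specify the graph and carry out the verification. Your guessed picture (a core of order roughly $t$, some vertices blown up into independent sets, each new vertex in exactly two triangles) does not determine a graph, and the step you yourself identify as the main obstacle --- saturating the non-edges between an enlarged vertex and the core vertices it misses, while staying $K_4$-free and keeping each enlarged vertex in only two triangles --- is precisely the design problem whose solution you leave unsupplied. The remaining claims (``one checks that $c=2t-12$,'' the case analysis for saturation, $K_4$-freeness) are likewise announced rather than executed, so nothing beyond the trivial reduction ``exhibit one good graph'' is actually proved.

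For comparison, the paper's proof has concrete content at exactly these points. It builds an $8$-vertex pre-structure $S(A,B)$ with $A=\{a_1,a_2,a_3,a_4\}$ carrying only the edges $a_1a_2$ and $a_3a_4$, $B=\{b_{123},b_{124},b_{134},b_{234}\}$ inducing a $4$-cycle, and $a_r\sim b_{ijk}$ iff $r\in\{i,j,k\}$; it checks this is a $K_4$-support-structure, and then Lemma~\ref{support-construction} attaches \emph{two} independent sets of very different sizes, $X$ with $|X|=n-t-4$ joined to $A\cup Y$ and $Y$ with $|Y|=t-4$ joined to $B\cup X$. This general lemma is what delivers saturation, minimum degree, and the count $|X|\cdot k_2(A)+(\mbox{triangles avoiding }X)=2(n-t-4)+(2t-4)+\cdots=2n+2t-12$ in one stroke. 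Note also that your account of the hypothesis $n\geq 2t$ is off: the vertices of the linear part $X$ have degree exactly $|A|+|Y|=t$ by design, and $n\geq 2t$ is needed so that the \emph{bounded} attached set $Y$, whose vertices have degree $|B|+|X|=n-t$, reaches degree $t$; it is not about core vertices dropping below degree $t$. These specifics are not cosmetic --- without them there is no proof of Theorem~\ref{H proposition}.
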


We conjecture that the upper bound in Theorem \ref{H proposition} is best possible, and that 
$H_t (n)$ is the unique extremal graph for all sufficiently large $n$. 

\begin{conjecture}\label{conjecture 1}
For any integer $t \geq 4$, there is an integer $n_t$ such that for all $n \geq n_t$,
\[
\textup{sat}_t (n , K_3 , K_4 ) = 2n + 2t -12
\]
and $H_t(n)$ is the unique extremal graph.
\end{conjecture}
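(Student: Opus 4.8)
The upper bound $\textup{sat}_t(n,K_3,K_4)\le 2n+2t-12$ is already supplied by Theorem~\ref{H proposition}, so proving the conjecture amounts to establishing the matching lower bound together with uniqueness for every $t\ge 4$ once $n$ is large. My plan is to upgrade the argument behind Theorem~\ref{main theorem} (the case $t=4$) into one that works uniformly in $t$. Two structural facts drive everything. First, since $G$ is $K_4$-free, every neighborhood $N(v)$ induces a triangle-free graph, so the number of triangles through $v$ equals $e(N(v))$, the number of edges inside $N(v)$. Second, saturation forces structure: for every nonadjacent pair $u,v$ there is an edge inside $N(u)\cap N(v)$, and for every edge $xy$ the set $N(x)\cap N(y)$ is independent, since an edge there would complete a $K_4$. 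These give the two countings I will balance, namely $3\,t(G)=\sum_v e(N(v))=\sum_{xy\in E}|N(x)\cap N(y)|$.

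\textit{The lower bound.} I would split $V(G)$ into a bounded \emph{core} $C$ of high-degree vertices and a \emph{periphery} $P=V(G)\setminus C$ of vertices with degree close to the minimum $t$. The crux is to show that the coefficient of $n$ is exactly $2$, not $t-1$: that all but $O(t)$ vertices lie in at least two triangles, while only a bounded set can lie in many. For a peripheral vertex $v$, the graph $N(v)$ is triangle-free on about $t$ vertices, and since $v$ is nonadjacent to almost all of $V(G)$, the witnessing edges guaranteed by saturation force $N(v)$ to contain at least two edges, hence $v$ to lie in at least two triangles. The key point is that the core should be chosen so that both endpoints of each such edge lie in $C$; then every triangle carries exactly one peripheral vertex, the peripheral vertices contribute disjoint families of triangles, and summing yields the clean linear term $2n$ \emph{without} the lossy factor of $3$ present in $3t(G)=\sum_v e(N(v))$. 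The additive constant $2t-12$ must then come from a finite optimization over the admissible cores, which I would carry out by classifying the triangle-free neighborhoods a core vertex of degree about $t$ can have.

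\textit{Uniqueness.} In the equality case I would trace which inequalities are tight. Tightness should force every peripheral vertex to have a neighborhood with \emph{exactly} two edges, to attach to the core in one of a few prescribed ways, and should force $C$ to realize the unique minimizer of the finite optimization above. Reassembling these local constraints, and invoking saturation one last time to pin the adjacencies between $P$ and $C$, should identify $G$ with $H_t(n)$. A cleaner route to both statements is stability: first prove that any $K_4$-saturated graph with $\delta(G)=t$ and at most $(2+o(1))n$ triangles must agree with $H_t(n)$ outside a bounded set, then bootstrap this approximate structure to the exact count via the local analysis above.

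\textit{Main obstacle.} For $t=4$ the core is tiny and the admissible triangle-free neighborhoods are few, which is precisely what makes Theorem~\ref{main theorem} tractable; the difficulty in general is that as $t$ grows, both the number of admissible core configurations and the number of ways a peripheral neighborhood can carry two edges grow, so the finite optimization and the equality analysis threaten to explode into many cases. The real work is to find a \emph{uniform} argument --- most plausibly a discharging scheme, or a convexity estimate on $\sum_{xy}|N(x)\cap N(y)|$ subject to the saturation constraints --- that pins the size and structure of $C$ for every $t$ at once rather than case by case. Making the threshold $n_t$ explicit, and guaranteeing that the periphery genuinely dominates so that an inflated core cannot cheat the linear term, is the part I expect to be hardest.
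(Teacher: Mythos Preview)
The statement you are addressing is a \emph{conjecture} in the paper, not a theorem: the authors do not prove it, and there is no ``paper's own proof'' to compare against. The paper establishes the case $t=4$ only (Theorem~\ref{main theorem}), and explicitly remarks that Conjecture~\ref{conjecture 1} remains open for $t\ge 5$. So your proposal cannot be assessed as ``matching'' or ``differing from'' the paper's argument; there is none.

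As a proof \emph{plan}, what you wrote is a reasonable outline of the natural strategy, and you are candid that it is not a proof. A few remarks on where the sketch is softest. Your claim that saturation forces every peripheral neighborhood $N(v)$ to contain at least two edges is the analogue of Lemma~\ref{lemma 1 edge}, and that part is not the bottleneck. The real gap is exactly the one you flag: the paper's proof for $t=4$ proceeds by an exhaustive case analysis on the edge pattern inside $N(x)$ for a minimum-degree vertex $x$ (two, three, or four edges, with subcases), together with the Rules Lemma (Lemma~\ref{general lemma}) to force nonempty $V_S$'s. Nothing in that argument is uniform in $t$, and your proposed ``core/periphery'' decomposition, discharging scheme, and ``finite optimization over admissible cores'' are all placeholders rather than arguments. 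In particular, you assert that the triangles through peripheral vertices can be made edge-disjoint by choosing the core so that both endpoints of each witnessing edge lie in $C$; but you give no mechanism for producing such a $C$ of bounded size, and in $H_t(n)$ itself the high-degree set $A\cup Y$ has size growing with $t$, so ``bounded'' must mean bounded in terms of $t$, which then feeds back into the additive constant in a way you have not controlled.

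In short: there is no error to correct and no paper proof to compare with; what you have is an honest sketch of an open problem, and the obstacle you identify at the end is precisely why the authors state this as a conjecture rather than a theorem.
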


Theorem \ref{main theorem}
shows that Conjecture \ref{conjecture 1} is correct when $t=4$.
The next theorem gives an upper bound for arbitrary $s > r \geq 3$.

\begin{theorem}\label{3 against s}
Let $s  > r \geq  3 $ and $t \geq 2 (s-2)+1$ be integers.  For $n \geq 2(s-2) + 2t$,
\[
\textup{sat}_t (n , K_r , K_s ) \leq \binom{s-2}{r-1} 2^{r-1} n + C_{s,r,t}
\]
where $C_{s,r,t}$ is a constant depending only on $r$, $s$, and $t$.  
\end{theorem}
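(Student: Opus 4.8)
The plan is to prove the bound by producing, for the given $s>r\ge 3$, $t\ge 2(s-2)+1$ and every $n\ge 2(s-2)+2t$, an explicit $K_s$-saturated graph on $n$ vertices with minimum degree $t$ and at most $\binom{s-2}{r-1}2^{r-1}n+C_{s,r,t}$ copies of $K_r$; since this is only an upper bound, no extremal characterisation is needed. The shape of the construction is dictated by the leading coefficient: $\binom{s-2}{r-1}2^{r-1}$ is exactly the number of copies of $K_{r-1}$ in the complete $(s-2)$-partite graph $K_{(s-2)\times 2}$ with every part of size $2$ (choose $r-1$ of the $s-2$ parts, then one vertex from each), and $K_{(s-2)\times 2}$ is $K_{s-1}$-free. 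So one wants about $n$ vertices, each of whose neighbourhoods contains a copy of $K_{(s-2)\times 2}$ and contributes nothing further to the count.

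I would realise this as follows. First construct a \emph{finite} graph $J=J(s,t)$ that is $K_s$-saturated, has $\delta(J)=t$, and contains a vertex $v_0$ with $\deg_J(v_0)=t$ such that $J[N_J(v_0)]$ is a copy of $K_{(s-2)\times 2}$ together with $t-2(s-2)$ isolated vertices (note $v_0$ then automatically lies in a $K_{s-1}$, being joined to a $K_{s-2}$ inside its neighbourhood). Then let $G$ be obtained from $J$ by blowing $v_0$ up into an independent set $B$ of size $n-|V(J)|+1$: each copy of $v_0$ is joined to $N_J(v_0)$ and all other adjacencies are inherited from $J$. Equivalently, $G$ is obtained from the bounded graph $J-v_0$ by attaching a large independent set $B$, every vertex of which has the same neighbourhood that $v_0$ had in $J$ (a fixed copy $A$ of $K_{(s-2)\times 2}$ together with a fixed independent set of $t-2(s-2)$ further vertices); the vertices of $B$ are the ``about $n$'' vertices above.

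Granting such a $J$, the rest is routine. For $K_s$-freeness: a clique of $G$ contains at most one copy of $v_0$ and is either a clique of $J-v_0$ (at most $\omega(J)=s-1$ vertices) or one copy of $v_0$ together with a clique of $J[N_J(v_0)]$ (at most $\omega(J[N_J(v_0)])\le s-2$ vertices), so $\omega(G)\le s-1$. For $K_s$-saturation one runs through the three non-edge types — two copies of $v_0$; a copy of $v_0$ and a vertex not adjacent to $v_0$ in $J$; two vertices of $V(J)\setminus\{v_0\}$ — in each case exhibiting a $K_{s-2}$ in the common neighbourhood from the corresponding $K_{s-2}$ guaranteed by saturation of $J$ (when that $K_{s-2}$ uses $v_0$, replace $v_0$ by any copy in $B$). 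The minimum degree is $t$, realised by the copies of $v_0$, every other vertex having degree at least $\delta(J)\ge t$. Finally, since $B$ is independent, each copy of $K_r$ uses at most one vertex of $B$: those using none lie in $J-v_0$, and those through a fixed $b\in B$ are $b$ together with a copy of $K_{r-1}$ in $J[N_J(v_0)]$, of which there are exactly $\binom{s-2}{r-1}2^{r-1}$. Hence the number of copies of $K_r$ in $G$ equals $\binom{s-2}{r-1}2^{r-1}\,(n-|V(J)|+1)$ plus the number in $J-v_0$, which is of the required form.

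The whole difficulty is therefore concentrated in constructing $J$ (equivalently, in designing and attaching the bounded ``stabiliser'' $J-v_0$). The prescribed neighbourhood of $v_0$ forces its $t-2(s-2)$ extra neighbours — call them $Z$ — to be independent and non-adjacent to $A$, since any edge incident to $N_J(v_0)$ would create extra copies of $K_r$ through $v_0$. But for $s\ge 4$ every non-edge of a $K_s$-saturated graph must have a $K_{s-2}$ (in particular an edge) in its common neighbourhood, whereas the non-edges inside $Z$ and between $Z$ and $A$ have, before the stabiliser is added, independent common neighbourhoods. So the stabiliser must contain cliques of size $s-2$ joined to all of $Z$ and, in a strictly controlled way, to $A$ — controlled because a $K_{s-2}$ of the stabiliser joined to a $K_{s-2}$ of $A$ would already produce a $K_{2s-4}\supseteq K_s$. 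Making these ``saturation cliques'' cover all the awkward non-edges, arranging that their own mutual non-edges are also saturated, keeping $\omega(G)$ at $s-1$, and ensuring that every stabiliser vertex — which, unlike $B$, gets no boost from the blow-up — still has degree at least $t$ while the stabiliser stays of size $O_{s,t}(1)$: this is the real content of the proof, and everything else is bookkeeping. The hypothesis $t\ge 2(s-2)+1$ is exactly what makes $Z$ nonempty, and $n\ge 2(s-2)+2t$ ensures $J$ fits inside the vertex set and $B$ is nonempty.
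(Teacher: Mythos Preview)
Your high-level plan coincides with the paper's: both recognise that $\binom{s-2}{r-1}2^{r-1}$ is the number of $K_{r-1}$'s in $K_{(s-2)\times 2}$, and both manufacture a large independent set each of whose vertices sees a fixed copy of $K_{(s-2)\times 2}$ together with some independent padding. Your ``blow up a vertex $v_0$ in a finite $K_s$-saturated $J$'' is a repackaging of the paper's ``$K_s$-support-structure $S(A,B)$ plus independent sets $X,Y$'': your blow-up class is their large set $X$, your padding set $Z$ is their $Y$, and your stabiliser $J-v_0-Z$ is their $A\cup B$. The verifications you sketch for $K_s$-freeness, saturation, $\delta=t$, and the $K_r$ count after the blow-up are essentially the same as theirs and are correct.

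The genuine gap is that you do not construct $J$. You explicitly flag this as ``the real content of the proof'' and then list the constraints the stabiliser must satisfy without exhibiting a graph that meets them. This is precisely where the paper does the work. Their stabiliser is a \emph{second} copy $B$ of $K_{(s-2)\times 2}$ on vertices $b_1,\dots,b_{2(s-2)}$, joined to $A$ by the cyclic rule $a_i\sim b_i,b_{i+1},\dots,b_{i+s-3}$ (indices mod $2(s-2)$), so that every $a\in A$ sees a $K_{s-2}$ inside $B$ and vice versa. The nontrivial step is proving this $S(A,B)$ is $K_s$-free: one fixes $a_1$, relabels, and shows that any clique in $N(a_1)$ using $\beta$ vertices of $B$ can use at most $s-2-\beta$ vertices of $A$, hence at most $s-2$ in total. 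Saturation is then obtained by a greedy edge-addition lemma (add non-edges one at a time until every missing edge is blocked), and your set $Z$ is joined to all of $B$, which simultaneously handles the non-edges inside $Z$ and between $Z$ and $A$ while giving $B$-vertices enough degree. None of this is bookkeeping; without an explicit stabiliser and the $K_s$-freeness argument you have an outline, not a proof.
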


It would be interesting to determine if there is a constant $c_{r,s}$, depending only on $r$ and $s$, such that 
for all $n \geq n(r,s,t)$ and $t \geq s-2$, there is an $n$-vertex $K_s$-saturated graph with minimum degree $t$
having at most $c_{r,s} n + o(n)$ copies of $K_r$.   Theorem \ref{H proposition} shows that 
such a constant exists in the case when $(r,s) = (3,4)$.  
Theorem \ref{3 against s} covers all $s > r \geq 3$, but assumes $t \geq 2 (s-2) +1$, and perhaps
the coefficient of $n$ could be improved (as in Theorem \ref{H proposition} 
when $(r,s)= (3,4)$).  We can improve this upper bound in the case $(r,s) = (3,5)$ as the 
following result shows.

\begin{theorem}\label{3 against 5}
For any $t \geq 8$ and $n \geq t + 30$,
\[
\textup{sat}_t ( n , K_3 , K_5) \leq 9n + C_t
\]
where $C_t$ is a constant dependent only on $t$.   
\end{theorem}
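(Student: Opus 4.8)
The plan is to exhibit an explicit $K_5$-saturated graph on $n$ vertices with minimum degree $t$ and only $9n + C_t$ triangles, mimicking the construction behind Theorem \ref{H proposition} but with the ``core'' enlarged from a single edge to a configuration suited to forbidding $K_5$. The natural backbone is the graph $W_5(m_1,1,m_3,m_4,1)$ from Proposition \ref{proposition 2}: here $b$ is replaced by a single vertex (a clique of size $s-3 = 2$, so actually an edge $\{b_1,b_2\}$), and $a_1,a_3,a_4$ are replaced by independent sets of sizes $m_1,m_3,m_4$ with $m_1+m_3+m_4 = n-4$. One checks directly that this graph is $K_5$-saturated: it is $K_5$-free because the only cliques are supported on $\{b_1,b_2,a_2,a_i\}$ or $\{b_1,b_2,a_i,a_{i+1}\}$ type sets which have size at most $4$, and adding any nonedge creates a $K_5$ since nonadjacent vertices have a triangle ($K_{s-2}=K_3$) in their common neighborhood. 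Counting triangles in $W_5(m_1,1,m_3,m_4,1)$ gives a quantity linear in $n$ with leading coefficient coming from the triangles through the edge $b_1b_2$ together with the $C_5$-structure — this is where the ``$9$'' should emerge, since each vertex in the large parts $A_1, A_3, A_4$ together with $b_1,b_2,a_2,a_5$ contributes a bounded number of triangles.

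The obstacle is that $W_5(m_1,1,m_3,m_4,1)$ has minimum degree only $s-1 = 4$, not $t$. To push the minimum degree up to $t$ I would augment the construction: attach to it a bounded-size ``booster'' gadget $B$ on $O(t)$ vertices such that every original low-degree vertex gains enough neighbors inside $B$ to reach degree $t$, while (i) no new $K_5$ is created, (ii) saturation is preserved (adding any edge still forces a $K_5$ — edges inside $B$, between $B$ and the backbone, and within the backbone all need checking), and (iii) only $O(t) = C_t$ new triangles appear. A clean way to do this is to let $B$ be (a small modification of) $K_{t-?}$ joined appropriately, or to take several disjoint copies of a small $K_5$-saturated graph with the right degree profile and identify a controlled number of vertices; the condition $t \geq 8$ and $n \geq t+30$ should be exactly what is needed for such a gadget to fit together consistently. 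Because $|B| = O(t)$, all triangles meeting $B$ number $O(t^3) = C_t$, a constant in $n$, so the triangle count is $9n + C_t$ as claimed.

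I would then organize the write-up as: (1) define the graph $G = G_t(n)$ explicitly as backbone $W_5(m_1,1,m_3,m_4,1)$ plus booster $B$, with the $m_i$ chosen so $m_1+m_3+m_4 = n - 4 - |B|$; (2) verify $\delta(G) = t$; (3) verify $K_5$-freeness by classifying all cliques; (4) verify saturation by checking that every nonedge, in each of the three location types, lies in a common neighborhood containing a triangle; (5) count triangles, separating those entirely inside the backbone (the $9n$ term, plus lower-order corrections absorbed into $C_t$) from those meeting $B$ (all $O(t^3)$).

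The main obstacle I expect is step (4) combined with step (3) for the booster $B$: one must design $B$ so that it simultaneously (a) raises degrees without creating a $K_5$ and (b) does not create a nonedge whose addition fails to produce a $K_5$ — in particular nonedges between two vertices of $B$, or between a backbone vertex and a $B$-vertex, must still have a triangle in their common neighborhood. Getting a single gadget that threads both needles for all $t \geq 8$, rather than an ad hoc family, is the delicate part; everything else is a bounded computation. If a uniform gadget proves awkward, a fallback is to let $B$ itself be a slightly larger instance of the same $W_5$-type construction with its parameters chosen to have minimum degree close to $t$, glued along the clique $\{b_1,b_2\}$, which keeps the clique structure transparent and hence makes (3) and (4) routine.
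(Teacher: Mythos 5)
Your proposal stops short of a proof at exactly the point where the theorem's content lies: the ``booster'' gadget is never constructed, and constructing it is the whole difficulty. Moreover, the chosen backbone $W_5(m_1,1,m_3,m_4,1)$ creates a quantitative obstruction that the sketch does not address. In that graph a vertex of a blown-up part has degree equal to $3$ plus the sizes of the adjacent blown-up parts, and the triangle count contains the term $2m_3m_4$ (each edge between the $a_3$- and $a_4$-parts forms a triangle with $b_1$ and with $b_2$), so it is not even linear in $n$ for general $m_i$. To get a linear count you must keep the parts adjacent to the large part bounded, and then you face a dichotomy: either you enlarge those parts to size about $t-3$ so that degrees reach $t$ without a booster, which makes the leading coefficient roughly $2(t-3)$, i.e.\ dependent on $t$, contrary to the claimed $9$; or you keep them of bounded size, in which case $\Theta(n)$ vertices have degree $O(1)<t$ and each must receive $t-O(1)$ neighbors inside your bounded gadget $B$. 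In the latter case $B$ is joined to linearly many backbone vertices, so it is no longer a bounded perturbation: every edge inside the common $B$-neighborhood of those vertices, and every edge from that neighborhood back to $\{a_2,a_5,b_1,b_2\}$, produces $\Theta(n)$ new triangles, while saturation demands that all nonedges inside $B$ and between $B$ and the backbone still have a $K_3$ in their common neighborhood (the low-degree vertices form an independent set, so they cannot supply it). Balancing ``few edges seen by the linear independent set'' against ``every nonedge completes a $K_5$'' is precisely the design problem the theorem solves, and your write-up explicitly defers it (``the delicate part''). The heuristic that the coefficient $9$ ``should emerge'' from the $W_5$ structure is also unsubstantiated; the natural counts from that backbone give roughly $5n$ plus uncontrolled booster contributions, or a $t$-dependent coefficient, not $9n+C_t$.

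For comparison, the paper proves the theorem by exhibiting a concrete pre-$K_5$-support-structure $R_t(n)$ and then invoking the general machinery of Lemmas \ref{pre-to-regular} and \ref{support-construction}: $A$ consists of three disjoint triangles indexed by the three parallel classes of lines of the affine plane of order $3$, $B$ consists of nine disjoint triangles $B_1,\dots,B_9$, and $a_{ijk}\in A_s$ is joined to $B_i\cup B_j\cup B_k\cup\{b_{\ell,s}\}$. The structure guarantees that completing it to a $K_5$-support-structure adds no edges inside $A$, and Lemma \ref{support-construction} then attaches a linear independent set $X$ joined to $A$ (plus a bounded set $Y$ joined to $B\cup X$) so that the minimum degree is $t$, the graph is $K_5$-saturated, and the triangles through $X$ number $|X|\cdot k_2(A)=9|X|$, giving $9n+C_t$. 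In other words, the coefficient $9$ is the number of edges of the small side $A$ seen by the linear independent set; your proposal would need an analogous explicit structure playing the role of $A$ and $B$, and that is exactly what is missing.
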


Finally, a simple argument in the case $r =3$ gives a lower bound on $\textup{sat}_t (n , K_3 , K_s)$.   

\begin{proposition}\label{lb 3}
Let $s > 3 $ and $t \geq 6 \binom{s-2}{2}$ be integers.  For 
$n \geq 2s - 2$,
\[
\binom{s-2}{2}(n-2) \leq \textup{sat}_t (n , K_3 , K_s).
\]  
\end{proposition}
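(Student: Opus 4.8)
My approach is to count triangles through edges. For $uv\in E(G)$ the number of triangles containing $uv$ is $|N(u)\cap N(v)|$, and since a triangle is counted once for each of its three edges,
\[
3T=\sum_{uv\in E(G)}|N(u)\cap N(v)|,
\]
where $T$ denotes the number of triangles in $G$. The crux is the structural claim that, because $s\ge 4$, every edge of $G$ lies in a triangle, i.e.\ $|N(u)\cap N(v)|\ge 1$ for every $uv\in E(G)$. Granting this and using $\delta(G)=t$ together with the hypothesis $t\ge 6\binom{s-2}{2}$,
\[
3T=\sum_{uv\in E(G)}|N(u)\cap N(v)|\ \ge\ |E(G)|\ \ge\ \frac{tn}{2}\ \ge\ 3\binom{s-2}{2}n\ \ge\ 3\binom{s-2}{2}(n-2),
\]
so $T\ge\binom{s-2}{2}(n-2)$ (in fact $T\ge\binom{s-2}{2}n$).

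So the real work is to show that a $K_s$-saturated graph with $s\ge 4$ has no edge lying in no triangle. Suppose $uv\in E(G)$ with $N(u)\cap N(v)=\emptyset$. Then $\{u,v\}$ is a maximal clique, no vertex of $N(u)\setminus\{v\}$ is adjacent to $v$, and symmetrically; also $u$ is not universal, since a universal $u$ would meet $N(v)\ne\emptyset$. Now invoke saturation: for each $b\in N(v)\setminus\{u\}$ the non-edge $ub$ yields a $K_{s-2}$ in $N(u)\cap N(b)$, and this clique cannot contain $v$ (else $v$ would have a neighbour in $N(u)$), so it is a $K_{s-2}$ inside $N(u)\setminus\{v\}$ that is complete to $b$; likewise every $a\in N(u)\setminus\{v\}$ gives a $K_{s-2}$ inside $N(v)\setminus\{u\}$ complete to $a$. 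Pick such a clique $A\subseteq N(u)\setminus\{v\}$ (complete to some $b$), pick $a\in A$, and pick the corresponding clique $B\subseteq N(v)\setminus\{u\}$ complete to $a$. Then $A$ and $B$ are disjoint (since $N(u)\cap N(v)=\emptyset$) and $|A\cup B|=2(s-2)\ge s$; if $A$ is complete to $B$, then $A\cup B$ is a $K_s$, a contradiction. In the remaining case the plan is to push this analysis — applying the saturation property to a non-adjacent pair straddling $A$ and $B$ and tracking the $K_{s-2}$'s it produces — until a $K_s$ is forced, contradicting $K_s$-freeness. (This structural fact can also be extracted from the analysis of $K_s$-saturated graphs in \cite{kmtt}.)

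The counting is routine; the main obstacle is this last step, i.e.\ promoting the ``$K_s$ minus one edge'' configurations that saturation hands over into an honest $K_s$ (equivalently, proving that a $K_s$-saturated graph with $s\ge 4$ has no maximal clique of size $2$). Note that the hypotheses $t\ge 6\binom{s-2}{2}$ and $n\ge 2s-2$ enter only in the final chain of inequalities (with $n\ge 2s-2$ merely ensuring that such graphs exist), so the estimate carries a large amount of slack; in particular it would already suffice to know that all but $O_s(1)$ edges of $G$ lie in triangles.
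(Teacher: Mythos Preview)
Your first case---when every edge lies in a triangle---matches the paper exactly. The problem is your structural claim: it is \emph{false} that every edge of a $K_s$-saturated graph with $s\ge 4$ lies in a triangle. In fact the paper explicitly remarks on this (see the opening of Section~\ref{section 2}), and the graph $H_t(n)$ defined there is a concrete counterexample: for any $x\in X$ and $y\in Y$ the edge $xy$ satisfies $N(x)\cap N(y)=(A\cup Y)\cap(B\cup X)=\emptyset$, so $xy$ lies in no triangle, yet $H_t(n)$ is $K_4$-saturated. So the step you flag as ``the main obstacle'' is not merely incomplete---it cannot be completed, and the gap is fatal to your plan as stated.

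The paper deals with this by keeping your first case but adding a genuinely different second case when some edge $xy$ lies in no triangle. Set $A=N(x)$, $B=N(y)$ (disjoint, since $xy$ is triangle-free) and $C=V(G)\setminus(\{x,y\}\cup A\cup B)$. For each $a\in A\setminus\{y\}$ the non-edge $ay$ forces a $K_{s-2}$ in $N(a)\cap N(y)\subseteq B$, giving $\binom{s-2}{2}$ triangles through $a$ with both other vertices in $B$; symmetrically for $b\in B\setminus\{x\}$. For $c\in C$, both $N(c)\cap N(x)\subseteq A$ and $N(c)\cap N(y)\subseteq B$ contain a $K_{s-2}$, and since $A\cap B=\emptyset$ these give $2\binom{s-2}{2}$ triangles through $c$ of two disjoint types. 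All of these triangles are pairwise distinct (they are classified by which of $A,B,C$ their vertices lie in), yielding at least $(|A|-1+|B|-1+2|C|)\binom{s-2}{2}\ge (n-2)\binom{s-2}{2}$ triangles. Note that this second case uses neither the hypothesis on $t$ nor the one on $n$; those are needed only in the first case, exactly as in your argument.
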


Proposition \ref{lb 3} implies $\textup{sat}_t ( n , K_3 , K_5) \geq 3 (n -2)$ for $t \geq 18$.  We suspect 
the upper bound of Theorem \ref{3 against 5} could give the correct coefficient of $n$.

In Section \ref{section 2}, we define the graphs that prove 
Theorems \ref{H proposition}, \ref{3 against s}, 
and Proposition \ref{3 against 5}.
In Section \ref{section proof}, we prove Theorem \ref{main theorem}. 
In Section \ref{lb 3 section}, we prove Proposition \ref{lb 3}.


\section{Proof of Theorems \ref{H proposition},
\ref{3 against s}, and \ref{3 against 5}}\label{section 2}


A natural question one might ask is whether or not every edge $e$ in a $K_s$-saturated graph
is in a triangle.  It turns out that this is not necessarily true. 
In fact, $K_s$-saturated graphs that have an edge in no triangle
contain certain subgraphs--what we call \textit{$K_s$-support structures}--which pave the way for constructions proving Theorems \ref{H proposition} and \ref{3 against s}. For this and the following sections, we will use $k_{r}(A)$ to denote the number of copies of $K_{r}$ in $A$.

\begin{defin}\label{support-structures}
Let $S(A,B)$ be a graph with vertex set $A \cup B$, where $A$ and $B$ are disjoint sets. We call $S(A,B)$ a \textbf{pre-$K_s$-support-structure} provided:
\begin{itemize}
\item $A$ and $B$ are both $K_{s-1}$-free,
\item $N(a)\cap B$ has a copy of $K_{s-2}$ for every vertex $a\in A$, 
\item $N(b)\cap A$ has a copy of $K_{s-2}$ for every vertex $b\in B$, and
\item $S(A,B)$ itself is $K_s$-free.
\end{itemize}
If $S(A,B)$ has the added condition that 
the subgraph induced by $A$ and the subgraph induced by $B$ are $K_{s-1}$-saturated, 
and adding any missing edge between $A$ and $B$ would create a $K_s$, then we call $S(A,B)$ a \textbf{$K_s$-support-structure}.
\end{defin}
\begin{lemma}\label{pre-to-regular}
Given a pre-$K_s$-support-structure $S(A,B)$, there is a  $K_s$-support-structure $S'(A,B)$ where $S'(A,B)$ is a supergraph of $S(A,B)$.
\end{lemma}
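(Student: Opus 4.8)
The plan is to build $S'(A,B)$ from $S(A,B)$ by repeatedly adding edges, never deleting any, so that $S'(A,B)\supseteq S(A,B)$ automatically. Starting from $S(A,B)$, I add edges one at a time, always maintaining the three invariants that $A$ is $K_{s-1}$-free, $B$ is $K_{s-1}$-free, and $S(A,B)$ is $K_s$-free; an edge is added precisely when doing so preserves all three, and the process stops when no further edge can be added. It terminates since the graph is finite and edges are only added. Call the result $S'(A,B)$. The four defining properties of a pre-$K_s$-support-structure hold for $S'(A,B)$: the invariants give directly that $A,B$ are $K_{s-1}$-free and that $S'(A,B)$ is $K_s$-free, while the two neighborhood conditions ("$N(a)\cap B$ contains a $K_{s-2}$ for every $a\in A$", and symmetrically) are \emph{monotone under edge addition} — adding an edge can only enlarge $N(a)\cap B$ or the subgraph it induces — so they still hold in the supergraph $S'(A,B)$.

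It remains to verify the two extra conditions. For the cross condition, let $ab$ with $a\in A$, $b\in B$ be a non-edge of $S'(A,B)$. Adding $ab$ creates no new edge inside $A$ or inside $B$, hence no $K_{s-1}$ inside $A$ or inside $B$; since the process terminated, adding $ab$ must create a $K_s$, as required. For the condition that $A$ induces a $K_{s-1}$-saturated graph (the argument for $B$ is identical), let $a_1a_2$ be a non-edge inside $A$. Because the process terminated, adding $a_1a_2$ creates a $K_{s-1}$ inside $A$, a $K_{s-1}$ inside $B$ (impossible, as $a_1a_2$ lies in $A$), or a $K_s$ in $S'(A,B)$; so it suffices to show the last case already yields a $K_{s-1}$ inside $A$. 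Let $\{a_1,a_2\}\cup C$ be such a $K_s$, with $C$ an $(s-2)$-clique in $N(a_1)\cap N(a_2)$. If $C\subseteq A$, then deleting any one vertex of $C$ leaves a $K_{s-1}$ inside $A$ through the edge $a_1a_2$, and we are done; likewise if $|C\cap B|=1$, deleting the unique vertex of $C\cap B$ leaves a $K_{s-1}$ inside $A$ through $a_1a_2$.

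The main obstacle is the remaining case, in which every $K_s$ blocking the pair $a_1,a_2$ meets $B$ in at least two vertices. Here one must rule this out (or handle it directly) using the structural hypotheses that were not yet exploited: each $b\in C\cap B$ has a $K_{s-2}$ inside $N(b)\cap A$, each $a_i$ has a $K_{s-2}$ inside $N(a_i)\cap B$, and $A$ and $B$ are $K_{s-1}$-free — the aim being to force an $(s-3)$-clique inside $A\cap N(a_1)\cap N(a_2)$, which together with $a_1,a_2$ gives the desired $K_{s-1}$ inside $A$ after adding $a_1a_2$. I expect this to be the only delicate point; everything else is bookkeeping. A variant that may sidestep it is to add edges in two stages — first saturating inside $A$ and inside $B$ subject to keeping $S(A,B)$ $K_s$-free, then adding all admissible $A$–$B$ edges — and to argue that at the close of the first stage no such ``bad'' blocking $K_s$ can persist.
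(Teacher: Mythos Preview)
Your approach is exactly the paper's: add edges greedily subject to the three invariants and claim that the terminal graph is a $K_s$-support-structure. The paper's proof is in fact briefer than yours --- after describing the greedy process it simply asserts that ``the final graph $S'(A,B)$ is a $K_s$-support-structure'' without checking the saturation clauses. You have gone further and correctly located the obstacle: when a non-edge $a_1a_2$ inside $A$ is blocked only by a $K_s$ meeting $B$ in at least two vertices, nothing forces a $K_{s-1}$ inside $A$.

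This obstacle is real and cannot be removed under the stated Definition~\ref{support-structures}. Take the paper's own $S(A,B)$ for $H_t(n)$: $A=\{a_1,a_2,a_3,a_4\}$ with edges $a_1a_2,\,a_3a_4$, $B=\{b_{123},b_{124},b_{134},b_{234}\}$ the $4$-cycle $b_{123}b_{234}b_{124}b_{134}b_{123}$, and $a_r\sim b_{ijk}$ iff $r\in\{i,j,k\}$. One checks that \emph{every} non-edge is already blocked (each missing $a_ia_j$ by a $K_4$ using two vertices of $B$, each missing $b$-edge by a $K_3$ in $B$, each missing cross edge by a $K_4$), so the greedy process halts immediately with $S'(A,B)=S(A,B)$. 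But $A$, two disjoint edges, is \emph{not} $K_3$-saturated: $a_1$ and $a_3$ have no common neighbour in $A$. Thus neither your argument nor the paper's can be completed as written, and indeed the paper's assertion that this $S(A,B)$ ``is a $K_4$-support-structure'' contradicts its own definition. The correct fix is to weaken the definition so that a $K_s$-support-structure only requires: adding any non-edge of $S'(A,B)$ creates a $K_{s-1}$ in $A$, a $K_{s-1}$ in $B$, or a $K_s$ in $S'(A,B)$. This is precisely the greedy termination condition (making the lemma immediate), and it is all that Lemma~\ref{support-construction} actually needs, since a new $K_{s-1}$ inside $A$ extends to a $K_s$ in $G$ via any vertex of $X$. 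Your two-stage variant would run into the same wall for the same reason.
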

\begin{proof}
We add edges one at a time to $S(A,B)$ arbitrarily, adding each missing edge $e$ only if it does not create a $K_{s-1}$ in $A$, a $K_{s-1}$ in $B$, or a $K_s$ in $S(A,B)$.
We continue this process until there are no longer any edges to add. Certainly after at most $\binom{|A\cup B|}{2}$ steps this process will end. Notice that with each edge added, the new graph is still a pre-$K_s$-support-structure, and in fact the final graph $S'(A,B)$ is a $K_s$-support-structure.
\end{proof}
\begin{lemma}\label{support-construction}
Let $S(A,B)$ be a $K_s$-support-structure, and set $N=t-\min\{|B|, \delta_{S(A,B)}(A)\}$ and $M=t-\min\{|A|, \delta_{S(A,B)}(B)\}$. Given integers 
\[
t > \min\{|A|, |B|, \delta_{S(A,B)}(A), \delta_{S(A,B)}(B)\} ~~\mbox{and}~~ 
n > |A|+|B|+N+M,
\]
there exists a $K_s$-saturated supergraph $G$ of $S(A,B)$ with order n and minimum degree t containing 
$n k_{r-1}(A) + C_{|A|,|B|,t}$ copies of $K_r$.
\end{lemma}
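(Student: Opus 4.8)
The plan is to take the $K_s$-support-structure $S(A,B)$ and build $G$ in two stages: first attach a large independent set to make the graph $K_s$-saturated on $n$ vertices, then attach a bounded number of extra vertices to push the minimum degree up to exactly $t$. For the first stage, observe that since $S(A,B)$ is a $K_s$-support-structure, every vertex $a \in A$ has a $K_{s-2}$ in $N(a)\cap B$ and every $b\in B$ has a $K_{s-2}$ in $N(b)\cap A$; fix one such $K_{s-2}$, call it $Q_a \subseteq B$ for each $a$, and $Q_b\subseteq A$ for each $b$. I would pick a single edge $a_0 b_0$ with $a_0\in A$, $b_0\in B$ (one exists because the pieces are nontrivial), and let $R = Q_{a_0}\cup\{a_0\}$, a clique of size $s-1$ living in $\{a_0\}\cup B$. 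Introduce $n - |A| - |B| - N - M$ new vertices forming an independent set $I$, and join every vertex of $I$ to all of $R$. Call the resulting graph $G_0$.

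The key verifications for $G_0$ are: (i) $G_0$ is $K_s$-free — any $K_s$ would have to use at most one vertex of $I$ (since $I$ is independent), hence $s-1$ vertices in $S(A,B)\cup R = S(A,B)$, contradicting that $S(A,B)$ is $K_s$-free, so in fact no $K_s$ appears; (ii) $G_0$ is $K_s$-saturated — adding an edge inside $I$, or from $v\in I$ to a vertex outside $R$, must be checked; here one uses that $R$ is a $K_{s-1}$ and that adding the edge $vw$ with $v\in I$ creates a clique with $R\cap N(w)$; some care is needed but this is where the "adding any missing edge between $A$ and $B$ creates a $K_s$" and the $K_{s-1}$-saturation of $A$, $B$ individually get used, together with the observation that every vertex of $I$ sees all of $R$. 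The triangle/$K_r$ count of $G_0$: each vertex of $I$ together with $R$ (a $K_{s-1}$) contributes $\binom{s-1}{r-1}$ copies of $K_r$ through that vertex... but wait — the claimed count is $n\, k_{r-1}(A) + C_{|A|,|B|,t}$, so the attachment must be done not to a single $(s-1)$-clique but in a way that each new vertex contributes exactly $k_{r-1}(A)$ copies of $K_r$. The right construction is: for each new vertex $v\in I$, join $v$ to a copy of $A$-like neighborhood — concretely, blow up by attaching $I$ to $B$ in the pattern of some fixed vertex $a_0\in A$ whose neighborhood $N(a_0)\cap B$ we use, so that $v$ plays the role of a clone of $a_0$; then $k_r$'s through $v$ correspond to $k_{r-1}$'s in $N(v) = N(a_0)\cap B$, and since $A$ was chosen so that this neighborhood is (isomorphic to a subgraph contributing) $k_{r-1}(A)$ — more precisely one clones a vertex of $A$ repeatedly, and the bookkeeping gives $k_{r-1}(A)$ per clone. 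I would set this up by literally duplicating a chosen vertex $a^*\in A$ that attains the minimum and writing the count as $(\text{number of clones})\cdot k_{r-1}(N(a^*)\cap B) + O(1)$, absorbing everything not scaling with $n$ into $C_{|A|,|B|,t}$.

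For the second stage, the low-degree vertices are exactly those in $I$ (degree $s-1$ or so) and possibly a few vertices of $A\cup B$; the quantities $N$ and $M$ are engineered to count how many extra apex-type vertices are needed to raise $\delta(A)$ and $\delta(B)$ to $t$. I would add $N$ new vertices each adjacent to a fixed $K_{s-2}$ inside $B$ plus to the deficient vertices of $A$ (and symmetrically $M$ vertices on the other side), chosen so no new $K_s$ is created — this is possible because attaching to a fixed $(s-2)$-clique only ever completes a $K_{s-1}$, never a $K_s$, as long as the new vertex is nonadjacent to anything completing the clique, which the bound $t>\min\{\dots\}$ and $n > |A|+|B|+N+M$ guarantee room for. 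One then checks $\delta(G) = t$: vertices of $I$ get their degree from being clones of $a^*$ — actually here is a subtlety, a clone of $a^*$ has degree $\deg(a^*)$ which may be less than $t$, so each clone must additionally be joined to a fixed $(t - \deg(a^*))$-set; since $G$ has $\ge s$ vertices this is fine and contributes only $O(1)$ extra $K_r$'s per clone if that fixed set is chosen to be a single clique of bounded size whose contribution we fold into the per-vertex count. The main obstacle is bookkeeping the $K_r$ count exactly so that the coefficient of $n$ comes out as $k_{r-1}(A)$ and not something larger: this forces the clones to be attached only along $N(a^*)\cap B$ (giving $k_{r-1}(N(a^*)\cap B)$, which one argues equals $k_{r-1}(A)$ for the support structures used, or is at most that — the statement should be read with $A$ chosen optimally), and forces all the "degree-boosting" and "saturation-boosting" edges to live on a bounded vertex set so their contribution is $O(1)$. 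Verifying $K_s$-saturation after all these additions — that every nonedge, including nonedges among the various new vertex classes, creates a $K_s$ — is the most delicate part and I would organize it by cases according to which classes ($I$, the $N$-apexes, the $M$-apexes, $A$, $B$) the endpoints of the nonedge lie in, using in each case the presence of a designated $(s-2)$-clique in the common neighborhood.
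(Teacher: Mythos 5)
There is a genuine gap, and it sits exactly where you hesitate. The lemma is proved in the paper by a different and much more rigid attachment: one adds an independent set $X$ of size $n-|A|-|B|-M$ joined to \emph{all} of $A$, and a second independent set $Y$ of size $M$ joined to all of $B$ and to all of $X$ (and not to $A$). This one construction does all three jobs at once: (a) every $K_r$ through a vertex $x\in X$ lies, apart from $x$, entirely in $A$ (it cannot use a second $X$-vertex since $X$ is independent, nor a $Y$-vertex since $Y$ is independent and non-adjacent to $A$), so each $X$-vertex contributes exactly $k_{r-1}(A)$ copies and everything else is an additive constant; (b) the join to $Y$ raises the degrees of $X$ and of $A$-vertices to at least $t$ without creating a single new clique of size $3$ or more through $X$; (c) saturation is immediate, because a missing edge inside $A$ creates (by $K_{s-1}$-saturation of $A$) a $K_{s-1}$ in $A$, which together with any $X$-vertex is a $K_s$, a missing edge meeting $X$ or $Y$ has the whole of $A$ (resp.\ $B$, resp.\ an $(s-2)$-clique of the support structure) in the common neighbourhood, and missing $A$--$B$ edges are handled by the support-structure definition.

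Your clone-of-$a^*$ construction fails on the two quantitative points and leaves the saturation point unresolved. First, a clone of $a^*$ contributes $k_{r-1}\bigl(N(a^*)\bigr)$ copies of $K_r$, which is neither equal to nor bounded by $k_{r-1}(A)$ in general: in the support structure used for $H_t(n)$ with $r=3$ one has $k_2(A)=2$, while $N(a_1)\supseteq\{a_2,b_{123},b_{124},b_{134}\}$ already spans $4$ edges, so each clone would carry at least $4$ triangles and the claimed coefficient $k_{r-1}(A)$ of $n$ is lost; your fallback ``or is at most that'' is false. Second, your degree-boosting repairs make this worse: joining every clone to a fixed bounded clique adds a fixed positive number of $K_r$'s \emph{per clone}, again changing the linear coefficient, and the auxiliary apex vertices you attach to a $K_{s-2}$ inside $B$ together with ``deficient'' vertices of $A$ can complete a $K_s$ (every $a\in A$ has a $K_{s-2}$ in $N(a)\cap B$), while their own degrees and the saturation of the non-edges they introduce are not verified. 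Third, with clones attached only along $N(a^*)$, a missing edge inside $A$, or from a clone to a vertex of $A\setminus N(a^*)$, is only guaranteed to create a $K_{s-1}$ inside $A$ (that is all the definition of a $K_s$-support-structure gives), not a $K_s$ of the ambient graph; it is precisely the full join of the new vertices to $A$ -- the step you replaced by cloning -- that upgrades such a $K_{s-1}$ to a $K_s$, and without it the graph you build need not be $K_s$-saturated. The second independent set $Y$ of the paper, joined to $B$ and $X$ but not to $A$, is the missing idea that lets one raise the minimum degree to $t$ at zero cost in cliques.
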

\begin{proof} Define $G$ as the graph obtained from $S(A,B)$ by 
taking two independent sets $X=\{x_1, x_2,\dots , x_{n-|A|-|B|-M}\}$ 
and  $Y=\{y_1, y_2,\dots , y_M\}$, and joining $X$ to $A \cup Y$, and joining 
$Y$ to $B \cup X$. Note that since $S(A,B)$ is a $K_s$-support-structure, $G$ is $K_s$-saturated. Next we show that the minimum degree of G is $t$. We have 
\begin{eqnarray*}
    \delta_G(Y\cup A) &= & \min\{\delta_G(Y), \delta_G(A)\} =|X| + \min\{|B|, \delta_{S(A,B)}(A)\} \\
    &\geq & N + \min\{|B|, \delta_{S(A,B)}(A)\} =t .
\end{eqnarray*}
By symmetry, $\delta_G ( X \cup B) \geq t$.  
Putting these two inequalities together, we see that $\delta(G)=\min\{\delta_G(Y\cup A), \delta_G(X\cup B)\}=t$.

We now count the number of $K_r$'s in $G$.  Note that by $G$'s construction, the number of vertices not in $X$ is dependent 
only on $|A|$, $|B|$ and $t$. Thus, the number of copies of $K_r$ that do not use a vertex in $X$ is also dependent 
only on these parameters.  
Suppose $K$ is a $K_r$ that does contain a vertex in $X$. Then $K$ can only contain one vertex in $X$, since $X$ is an independent set. Also, $K$ cannot contain any vertex in $Y$ because $r \geq 3$ and $Y$ is an independent set. Thus, all the vertices in $K$ that are not in $X$ must be in $A$. We conclude that the number of copies of $K_r$ containing a vertex in $X$ is \[
|X| k_{r-1}(A) = ( n - |A| - |B| - M ) k_{r-1}(A).  
\]
\end{proof}

We now define three graphs by defining pre-$K_s$-support structures, and then
applying Lemmas \ref{pre-to-regular} and \ref{support-construction}. 

\smallskip

\noindent
\textbf{The graph $H_t(n)$:} Let $t\geq 4$ and $n > 2t$ be integers. Let $A = \{ a_1,a_2,a_3,a_4 \}$ and $B = \{ b_{123} , b_{124} , b_{134} , b_{234} \}$, and construct the graph $S(A,B)$ with vertex set $A \cup B$, and the following edges:
\begin{itemize}
\item $a_1 a_2$ and $a_3  a_4$,
\item $b_{123} b_{234}$, $b_{234} b_{124}$, $b_{124} b_{134}$, and $b_{134} b_{123}$,
\item $a_r b_{ijk}$ if and only if $r \in \{i,j,k \}$.
\end{itemize} 
It is easy to verify that $S(A,B)$ is a $K_s$-support-structure. Employing Lemmas \ref{pre-to-regular} and \ref{support-construction}, 
we obtain a $K_4$-saturated graph, which we call $H_t(n)$, that has $n$-vertices, 
minimum degree $t$, and containing $2n +2t -12$ triangles. 
Thus, $H_t(n)$ proves Theorem \ref{H proposition}.

\smallskip
\noindent
\textbf{Remark:} The graph $H_4(n)$ appears briefly in \cite{aehk}, but the focus in that paper is on minimizing the number of edges in a $K_4$-saturated graph with minimum degree 4.  Up to isomorphism, there is a unique graph that minimizes the number of edges in a $K_4$-saturated graph with minimum degree 4, but this graph does not minimize the number of triangles.

\bigskip

Next we construct the graph that proves Theorem \ref{3 against s}.   

\smallskip

\noindent
\textbf{The graph $F_{s,t}(n)$:} Let $s > r \geq 3$, $t \geq 2 ( s-2)+1$, and $n \geq 2(s-2) + 2t$. Let $A = \{a_1 , a_2, \dots , a_{2(s-2)} \}$ and $B = \{ b_1 , b_2 , \dots , b_{ 2 (s-2) } \}$, and construct the graph $S(A,B)$ with vertex 
set $A \cup B$, and the following edges:
\begin{itemize}
\item $a_i a_j$ if and only if $i \not\equiv j+(s-2) \pmod{2(s-2)}$,
\item $b_i b_j$ if and only if $i \not\equiv j+(s-2) \pmod{2(s-2)}$,
\item $a_i b_{i \pmod{2(s-2)}}$, $a_i b_{i+1 \pmod{2(s-2)}}$, \dots , and $a_i b_{i+(s-2) -1 \pmod{2(s-2)}}$.
\end{itemize}
Notice that the vertices in $A$ and the vertices in $B$ form two complete $(s-2)$-partite graphs with 
2 vertices in each part.  

We now show that $S(A,B)$ is a pre-$K_s$-support-structure. 
It is easy to verify that $A$ and $B$ are $K_{s-1}$-free, and that each $a \in A$ and $b \in B$ has a copy of $K_{s-2}$ in $N(a)\cap B$ and $N(a)\cap B$, respectively. 
Next we prove that $S(A,B)$ is $K_s$-free.  
It is enough to show that $a_1$ does not lie in a $K_s$ since $S(A,B)$ is vertex transitive.
The neighborhood of $a_1$ is the union of the two sets 
\begin{center}
$\mathcal{A}_1 = \{a_2 , a_3 , \dots , a_{ 2 (s-2) } \} \backslash \{ a_{s-1} \}$
~~and~~
$\mathcal{B}_1 = \{ b_1 , b_2 , \dots , b_{s-2} \}$.
\end{center}
Let $H_1$ be the subgraph of $S(A,B)$ induced by $\mathcal{A}_1 \cup \mathcal{B}_1$.  
We will show $H_1$ is $K_{s-1}$-free.  
It 
will be convenient to relabel the vertices in $\mathcal{A}_1$ as
\begin{center}
$a_s=1$, $a_{s+1} = 2$, $a_{s+2} = 3, \dots , a_{2 (s-2)} = s-3$,
\end{center}
and
\begin{center}
 $a_2 = s-2$, $a_3 = s-1, a_4  =  s , \dots , a_{s-2}=  2s-6$.
 \end{center}
With this relabeling, in the graph $H_1$ the vertex $b_i$~($1 \leq i \leq s-2$) is adjacent to all vertices in
\begin{equation}\label{b adjacencies}
\{ i , i+1 , i+2 , \dots , i + s- 4 \} \cup ( \mathcal{B}_1 \backslash \{ b_i \} ).
\end{equation}
Now suppose $K$ is a clique in $H_1$.  Let $b_{i_1} , b_{i_2} , \dots , b_{i_{ \beta} }$ be the vertices in $K$ 
that are in $\mathcal{B}_1$, where $1 \leq i_1 < i_2 < \dots < i_{ \beta} \leq s- 2$.  
By (\ref{b adjacencies}), if $a \in \{1,2, 3 , \dots , 2s-6 \}$ is a vertex in $\mathcal{A}_1 \cap K$, then
\[
i_{ \ell } \leq a \leq i_{ \ell} + s- 4
\]
for $\ell = 1 ,2 , \dots , \beta$.  In particular, $i_{ \beta} \leq a \leq i_1 + s - 4$.  
Adding $- i_{ \beta} +1$ throughout leads to
\begin{eqnarray*}
1 & \leq & a - i_{ \beta} + 1 \leq i_1 - i_{ \beta} + 1 + s -4 = - ( i_{\beta} - i_1 )   + s -3 \\
& \leq & - \beta +1 + s- 3 = s - \beta - 2.
\end{eqnarray*}
Therefore, there are at most $s - \beta - 2$ vertices in $\mathcal{A}_1$ that are in $K$, so $K$ contains at most
\[
( s- \beta -2 ) + \beta = s-2
\]
vertices.  We conclude that the neighborhood of $a_1$ is $K_{s-1}$-free which implies $S(A,B)$ is $K_s$-free.

Adding edges to $S(A,B)$ using the process described in Lemma \ref{pre-to-regular}, we 
obtain a  $K_s$-support-structure $S'(A,B)$. Notice that since $A$ and $B$ were already $K_{r-1}$-saturated, the only edges added were  between $A$ and $B$, so the subgraphs induced by $A$ and $B$ have not changed. Applying Lemma \ref{support-construction} to $S'(A,B)$, we obtain a $K_s$-saturated graph $F_{s,t}(n)$ with $n$ vertices, minimum degree $t$, and
containing at most $nk_{r-1}(A)+C_{r,s,t} = \binom{s-2}{r-1}2^{r-1}n+C_{r,s,t}$ copies of $K_r$.

A computation, using the assumption that $n \geq 2(s-2) + 2t$, shows that all vertices in $F_{s,t}(n)$ have 
degree at least $t$ (vertices in $X$ will have degree $t$).  The graph $F_{s,t}(n)$ proves Theorem \ref{3 against s}.  

\bigskip

\textbf{The graph $R_{t}(n)$:} Let $t > 9$, and $n > 2t+12$. Let $A$ be the disjoint union of the three $K_{3}$'s
\[ 
A_1  = \{a_{147}, a_{258}, a_{369}\}, ~~ ,  A_2 = \{a_{159}, a_{267}, a_{369}\}, ~~
\mbox{and} ~~
    A_3 = \{a_{169}, a_{249}, a_{357}\}.
    \]
The subscripts appearing in a given triangle form a class of parallel lines in an affine plane of order 3.  

Next, let $B$ be the disjoint union of triangles $B_1$ through $B_9$, where each $B_i$ is given by
\begin{align*}
    B_i = \{b_{i,1}, b_{i,2}, b_{i,3}\}.
\end{align*}
Construct the graph $S(A,B)$ by connecting each vertex $a_{ijk} \in A_s$ to all vertices in the union $B_i\cup B_j\cup B_k\cup \{b_{t,s}:1\leq t\leq 9\}$. 

We now show that $S(A,B)$ is a pre-$K_5$-support structure. It is easy to verify that $A$ and $B$ are $K_{4}$-free, and that each $a \in A$ and $b \in B$ has a copy of $K_{3}$ in $N(a)\cap B$ and $N(a)\cap B$, respectively. Also, since $A$ and $B$ are $K_4$-free, a possible $K_5$ in $S(A,B)$ must have at least two vertices in $A$, say $a_{i_1j_1k_1},a_{i_2j_2k_2}\in A_s$. However, by $S(A,B)$'s construction, their common neighborhood in $B$ contains at most 1 vertex. Hence $S(A,B)$ is $K_5$-free.

Adding edges to $S(A,B)$ using the process described in Lemma \ref{pre-to-regular}, we
obtain a  $K_s$-support-structure $S'(A,B)$. Importantly, since by $S(A,B)$'s construction every pair of non-adjacent vertices in $A$ must have a $K_{3}$ in their common neighborhood in $B$, no edges were added to $A$, so the subgraph induced by $A$ has not changed. Applying Lemma \ref{support-construction} to $S'(A,B)$, we obtain a $K_5$-saturated graph $R_{t}(n)$ with $n$ vertices, minimum degree $t$, and
containing at most $nk_{3}(A)+C_{t}= 9n+C_{t}$ copies of $K_3$.
\bigskip

\noindent
\textbf{Remark:} The construction of $R_{t}(n)$
can be generalized to create a $K_s$-support-structure $R_{s,t}(n)$ 
for any $s\geq 3$, but for brevity's sake we have chosen to omit this. Notice that for $s=5$, $R_{5,t}(n)$'s $9n+C_t$ copies of $K_3$ is less than 
 $F_{5,t}(n)$'s $12n+C_t$ copies of $K_3$. This turns out to be an exception; 
 for $s=6$, $R_{6,t}(n)$ and $F_{6,t}(n)$ both have $24n+C_t$ copies of $K_3$, and for $s\geq7$, $R_{7,t}(n)$ contains more copies of $K_3$ than $F_{7,t}(n)$. In general, $R_{s,t}(n)$ will
 have fewer copies of $K_r$ than $F_{s,t}(n)$ for $s \in \{4,5, \dots, 2^{r-1}+1\}$, but when $s > 2^{r-1}+2$, $R_{s,t}(n)$ contains more copies of $K_r$ than $F_{s,t}(n)$. This suggests that a better construction may exist.  


\section{Proof of Theorem \ref{main theorem}}\label{section proof}

In this section, we prove Theorem \ref{main theorem}.  The proof will be broken up into several
lemmas.   
We write $k_3(G)$ 
for the number of triangles in $G$.  For $x \in V(G)$, $N(x)$ is the neighborhood of $x$.  
Let us set up some specialized notation that will be used in the rest of this section.  
This same notation was used in \cite{aehk}.  

\bigskip

\noindent
\textbf{Setup and Notation:} 
For the remainder of Section \ref{section proof}, 
let $G$ be an $n$-vertex graph that is 
$K_4$-saturated, and has
$\delta (G) = 4$.
Let $x \in V(G)$ be a vertex of degree 4, and let 
\[
N(x) = \{ x_1,x_2,x_3,x_4 \} ~~~ \mbox{and} ~~~ N[x] = \{ x \} \cup N(x).
\] 
Let $Y = V(G) \backslash N[x]$.
If $y \in Y$, then $N(y) \cap N(x)$ must contain an edge so
that every vertex in $Y$ is adjacent to 
at least two vertices in $N(x)$.
For $S \subseteq \{1,2,3,4 \}$, let  $V_S$ be the vertices 
$y \in Y$ such that $y$ is a neighbor of $x_i$ if and only if $i \in S$.
For notational convenience, we will omit braces and commas so if, say
$S = \{1,3,4 \}$, then we write $V_{134}$ rather than
$V_{ \{ 1,3 ,4 \} }$.  
Given sets $S, T \subseteq \{1,2,3,4 \}$, write 
\[
V_S \sim V_T
\]
if all vertices in $V_S$ are adjacent to all vertices in $V_T$,
and $V_S \nsim V_T$ if no vertex in $V_S$ is adjacent to a vertex
in $V_T$.  

An important observation is that if $V_S \neq \emptyset$, then there must be a pair $\{i,j \} \subseteq S$ such
that $x_i$ is adjacent to $x_j$.  Also,
if $\{x_i : i \in S \}$ forms an independent set in $N(x)$, then $V_S = \emptyset$.  
Because of this, \underline{we  ignore all $V_S$} 
for which $\{x_i : i \in S \}$ is an independent set in $N(x)$.  

Given a subset $S \subset \{1,2,3,4 \}$ with $i \notin S$ for some $i \in \{1,2,3,4 \}$, 
let $\mathcal{T}_{S,i}$ be the set 
of all $T \subseteq \{1,2,3,4 \}$ for which $i \in T$, and for any $j,k \in S$ for which $x_j$ and $x_k$ are adjacent
vertices in $N(x)$, we have $| T \cap \{j,k \} | \leq 1$.  

\begin{lemma}[Rules Lemma]\label{general lemma}
Let $S \subset \{1,2,3,4 \}$ with $i \notin S$ for some $i \in \{1,2,3,4 \}$.
If $V_S \neq \emptyset$, then $y$ is adjacent to some vertex in 
\[
\bigcup_{T \in \mathcal{T}_{S,i} } T
\]
and in particular, this union is not empty.
\end{lemma}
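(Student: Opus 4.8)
The plan is to invoke $K_4$-saturation directly. Fix any $y \in V_S$ and an index $i \notin S$; since $y \in Y$ and $i \notin S$ we have $y \not\sim x_i$, so $G + yx_i$ contains a copy of $K_4$. As $G$ is $K_4$-free, this $K_4$ must use the new edge, so it has the form $\{y, x_i, u, v\}$ with $u,v$ distinct from $y$ and $x_i$ and with $yu, yv, x_iu, x_iv, uv$ all edges of $G$; in particular $u,v \in N(y)\cap N(x_i)$. The first step is to pin down where $u$ and $v$ can live. Neither can equal $x$, since $u,v \sim y$ but $x \not\sim y$. And $u,v$ cannot both lie in $N(x)$: if $u = x_j$ and $v = x_k$ then $x_ix_j, x_ix_k, x_jx_k$ are all edges, so $\{x, x_i, x_j, x_k\}$ would be a $K_4$ in $G$. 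Since $V(G) = N[x]\cup Y$ and $N[x] = N(x)\cup\{x\}$, this forces at least one of $u,v$ --- call it $w$ --- to lie in $Y$, so $w \in V_T$ for $T := \{\ell : w \sim x_\ell\}$, and $i \in T$ because $w \sim x_i$.

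The second step is to check $T \in \mathcal{T}_{S,i}$. Suppose not; then there are $j,k \in S\cap T$ with $x_j \sim x_k$ in $N(x)$. But then all six pairs among $\{y, w, x_j, x_k\}$ are edges of $G$: $yw$ because $w$ lies in the created $K_4$ together with $y$; $yx_j$ and $yx_k$ because $j,k \in S$ and $y \in V_S$; $wx_j$ and $wx_k$ because $j,k\in T$ and $w \in V_T$; and $x_jx_k$ by assumption. This is a $K_4$ in $G$, a contradiction. Hence $T \in \mathcal{T}_{S,i}$, and the neighbor $w$ of $y$ lies in $V_T$ with $T \in \mathcal{T}_{S,i}$; in particular $\bigcup_{T\in\mathcal{T}_{S,i}} V_T \neq \emptyset$. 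Since $y \in V_S$ was arbitrary, this is exactly the assertion of the lemma.

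I do not anticipate a genuine obstacle: the whole argument is a single saturation step plus two uses of "$G$ is $K_4$-free" to forbid a clique. The only points requiring care are the bookkeeping: first, that at least one of the two auxiliary vertices $u,v$ produced by saturation must fall into $Y$ (this is where the clique $\{x, x_i, x_j, x_k\}$ is used), and second, that the adjacency pattern $T$ of that vertex genuinely satisfies the defining condition of $\mathcal{T}_{S,i}$ (this is where the clique $\{y, w, x_j, x_k\}$ is used). It is also worth recording, as the Setup already permits, that one may ignore every $V_S$ for which $\{x_\ell : \ell \in S\}$ is independent, since those $V_S$ are empty and the hypothesis $V_S \neq \emptyset$ then rules such $S$ out automatically.
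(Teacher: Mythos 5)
Your proposal is correct and follows essentially the same argument as the paper: a single saturation step producing an adjacent pair in $N(y)\cap N(x_i)$, ruling out both lying in $N(x)$ via the clique $\{x,x_i,x_j,x_k\}$, and ruling out $T\notin\mathcal{T}_{S,i}$ via the clique $\{y,w,x_j,x_k\}$. The only difference is that you spell out the saturation step and the exclusion of $x$ a bit more explicitly than the paper does.
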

\begin{proof} 
Suppose $y \in V_S$.  Since $i \notin S$, the vertex $y$ is not adjacent to $x_i$ where $x_i \in N (x)$.  
There must be an edge $\alpha \beta $ with both $ \alpha $ and $\beta$ in the intersection $ N(y) \cap N(x_i)$.
If $\alpha , \beta \in N(x)$, then $\{ x , x_i , \alpha , \beta \}$ is a $K_4$ which is a contradiction.  
Thus, we may assume that $\alpha \in Y$, say $\alpha \in V_T$.  Because $\alpha$ is adjacent to 
$x_i$, we have $i \in T$.  If there is a $j,k \in S$ for which $x_j x_k $ is an edge in $N(x)$ 
and $j,k \in T$, then $\{ y , \alpha , x_j , x_k \}$ is a $K_4$.  
We conclude that $T$ must be some member of $\mathcal{T}_{S,i}$.  
\end{proof}

\bigskip

It will be convenient to represent Lemma \ref{general lemma} with an arrow diagram and 
we refer to these as Rules.  Assuming the set up and conclusion of Lemma \ref{general lemma} where
$\mathcal{T}_{S,i} = \{ T_1 , T_2 , \dots , T_{ \ell} \}$, then we illustrate 
using the figure below.  
\begin{center}

\begin{tikzpicture}[scale=.8]

\draw [->] (-.1,1.5) -- (2,0);
\draw [->] (-.1,1.5) -- (2,2);
\draw [->] (-.1,1.5) -- (2,3);

\draw (-.1,1.5) node[left]{$V_{S}$};
\draw (2,0) node[right]{$V_{T_{ \ell} }$};
\draw (2,2) node[right]{$V_{T_2}$};
\draw (2,3) node[right]{$V_{T_1}$};

\draw(2.2,1.1) node[right]{$\vdots$};

\end{tikzpicture}

Rule for $V_S$ and choice of $i \notin S$
\end{center}
We also use the notation $V_S \rightarrow V_{T_1} \cup V_{T_2} \cup \dots \cup V_{T_{ \ell } }$.  
Note that the assertion of Lemma \ref{general lemma} is not that $T_j \neq \emptyset$ 
for every $T_j \in \mathcal{T}_{S,i}$, but 
only that at least one of these sets is not empty.  

The first application of Lemma \ref{general lemma} will be 
in the proof of Lemma \ref{claim 3 edges}.
First we prove two easier lemmas.  

\begin{lemma}\label{lemma 1 edge}
The number of edges in $N(x)$ is at least 2.
\end{lemma}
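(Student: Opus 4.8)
The plan is to show that $N(x)$ cannot be an independent set, and cannot contain exactly one edge — the latter being handled in a later lemma, so here I only need to rule out the case of zero edges. (If the lemma as stated is only the "at least $2$" claim and a separate lemma handles the exact case, then what follows is the full argument; if not, I would split into two cases, "no edge in $N(x)$" and "exactly one edge in $N(x)$", and contradict each.)

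First I would suppose for contradiction that $N(x) = \{x_1,x_2,x_3,x_4\}$ is an independent set in $G$. Since $G$ is $K_4$-saturated and $x$ has degree $4$, for each $i\ne j$ the non-edge $x_ix_j$ (if it is a non-edge — here all are) must, upon addition, create a $K_4$; more to the point, consider instead that $x$ is nonadjacent to every vertex of $Y$, so for each $y \in Y$ the common neighborhood $N(x)\cap N(y)$ must contain an edge. But $N(x)$ being independent means the only edges available inside $N(x)$ are none, so no vertex of $Y$ can have its required edge lying inside $N(x)$; that observation alone does not immediately contradict anything, since $y$'s witnessing edge for the non-edge $xy$ need only lie in $N(x)\cap N(y)$, and both endpoints of that edge must be common neighbors of $x$, hence must lie in $N(x)$ — and an edge with both endpoints in an independent set is impossible. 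This is the contradiction: if $N(x)$ is independent, then $x$ together with any $y\in Y$ (and $Y \neq \emptyset$ since $n \geq 14 > 5$) forms a non-edge whose addition cannot create a $K_4$, contradicting saturation. Hence $N(x)$ has at least one edge.

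To push from "at least one edge" to "at least two edges", I would again argue by contradiction: suppose $N(x)$ has exactly one edge, say $x_1x_2$, and $x_3,x_4$ are each nonadjacent to everything in $N(x)$. Now $x$ is nonadjacent to $x_3$ is false ($x_3\in N(x)$), so instead use that $x_3$ has degree at least $4$ and examine where its neighbors lie, or use the saturation condition on the non-edge $x_1x_3$: adding $x_1x_3$ creates a $K_4$, so $x_1$ and $x_3$ have a common neighbor adjacent to both, i.e. $N(x_1)\cap N(x_3)$ contains an edge. One of the common neighbors could be $x$ (which is adjacent to both $x_1$ and $x_3$), but $x$ has no neighbor other than $x_1,x_2,x_3,x_4$, and $x_2$ is the only other neighbor of $x$ adjacent to $x_1$, while $x_2$ is not adjacent to $x_3$; so the witnessing $K_4$ for $x_1x_3$ cannot use $x$, forcing the common neighbor structure into $Y$. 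Carefully chasing the adjacencies forced among $V_S$'s (in particular noting $V_{34}=\emptyset$ and any $V_S$ with $\{x_i:i\in S\}$ independent is empty, so the only nonempty classes involve $\{1,2\}$) should yield a contradiction with $\delta(G)=4$ at $x_3$ or $x_4$, or with $K_4$-freeness.

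The main obstacle I expect is the second case (exactly one edge): unlike the zero-edge case, which dies instantly from "an edge needs two endpoints in an independent set," the one-edge case requires tracking which $V_S$ can be nonempty and showing the degree-$4$ requirement at $x_3$ (whose neighbors all lie in $Y$, and each such neighbor must also be adjacent to $x_1$ or $x_2$ to satisfy its own witnessing edge) cannot be met without creating a $K_4$ or violating saturation elsewhere. This is exactly the kind of bookkeeping the Rules Lemma (Lemma \ref{general lemma}) is designed to streamline, so I would set up the relevant rule for, say, $V_{S}$ with $i=3$ and read off the forced adjacencies from there.
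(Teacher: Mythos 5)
Your first case (no edges in $N(x)$) is complete and correct: any $y \in Y$ exists since $n \geq 14$, the non-edge $xy$ forces an edge inside $N(x) \cap N(y) \subseteq N(x)$, and an independent $N(x)$ cannot supply one. Note, for comparison, that the paper does not argue this lemma at all; it simply cites Lemma 3.2 of \cite{kmtt}, so any self-contained argument is welcome here.

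The genuine gap is your second case (exactly one edge, say $x_1x_2$): you stop at ``carefully chasing the adjacencies \dots should yield a contradiction,'' which is a plan, not a proof. The case does close, and in fact almost immediately from the observation you already make: since $x_1x_2$ is the \emph{only} edge in $N(x)$, every $y \in Y$ must have the edge $x_1x_2$ inside $N(x)\cap N(y)$, i.e.\ every vertex of $Y$ is adjacent to \emph{both} $x_1$ and $x_2$ (not merely to one of them, as you write). Now take the non-edge $x_1x_3$ (or $x_3x_4$): saturation gives an edge $\alpha\beta$ in $N(x_1)\cap N(x_3)$, and as you correctly argue $x$ cannot be an endpoint (the other endpoint would have to lie in $N(x)$ and be adjacent to both $x_1$ and $x_3$, which no $x_i$ is). Hence $\alpha,\beta \in Y$, so both are adjacent to $x_1$ and $x_2$, and $\{\alpha,\beta,x_1,x_2\}$ is a $K_4$, contradicting $K_4$-freeness. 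This is the same two-line finish used in Case 1 of Lemma \ref{lemma 2 edges}; no appeal to the Rules Lemma or to $\delta(G)=4$ is needed. As submitted, though, the one-edge case is asserted rather than proved, so the argument is incomplete.
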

\begin{proof}[Proof of Lemma \ref{lemma 1 edge}]
See Lemma 3.2 of \cite{kmtt} (the proof is not difficult).
\end{proof}

\begin{lemma}\label{lemma 2 edges}
If $n \geq 14$ and the number of edges in $N(x)$ is 2, then 
\[
k_3 (G) \geq 2n -4.
\]
Furthermore, equality holds if and only if $G$ is isomorphic to $H_4(n)$.
\end{lemma}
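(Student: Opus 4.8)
The plan is to study the local structure of $G$ around $x$ through the classes $V_S$, splitting into the only two ways two edges can sit on the four vertices of $N(x)$: a matching, say $x_1x_2$ and $x_3x_4$, or a path, say $x_1x_2$ and $x_2x_3$ (with $x_4$ isolated in $N(x)$).

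First I would rule out the path case outright. Since $x_1x_4$ is a non-edge, saturation forces a $K_4$ in $G+x_1x_4$, hence an edge lying inside $N(x_1)\cap N(x_4)$. A short bookkeeping with the $V_S$ notation (the only indices sets $S$ with $\{1,4\}\subseteq S$ that a nonempty class may have) gives $N(x_1)\cap N(x_4)=\{x\}\cup V_{124}\cup V_{1234}$. But $V_{124}$ and $V_{1234}$ are each independent and mutually non-adjacent (any two of their indices include the edge $x_1x_2$), and neither meets $N(x)$ apart from $\{x_1,\dots,x_4\}$, so this whole set is independent. Then $G+x_1x_4$ has no $K_4$, a contradiction, and only the matching case remains.

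In the matching case I would proceed in three steps. Step 1: $V_{12}=V_{34}=\emptyset$; indeed, for $y\in V_{12}$ a $K_4$ in $G+yx_3$ would need an edge inside $N(y)\cap N(x_3)\subseteq V_{34}\cup V_{134}\cup V_{234}$, which is independent. Step 2: each of $V_{123},V_{124},V_{134},V_{234}$ is nonempty, and, more strongly, $\mathcal L:=V_{123}\cup V_{124}$ is completely joined to $\mathcal R:=V_{134}\cup V_{234}$. Nonemptiness comes from the four non-edges $x_ix_j$ inside $N(x)$: a $K_4$ in $G+x_ix_j$ forces an edge between a specific pair among these classes, the other candidate edges being ruled out by independence. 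For the complete join, if $p\in\mathcal L$, $u\in\mathcal R$ were non-adjacent then $G+pu$ needs an edge inside $N(p)\cap N(u)$; but $N(p)\cap Y\subseteq\mathcal R$ and $N(u)\cap Y\subseteq\mathcal L$ (using Step 1, that $\mathcal L,\mathcal R$ are independent, and that $V_{1234}$ has no neighbour in $Y$), so $N(p)\cap N(u)$ consists of two non-adjacent vertices of $N(x)$ and nothing more — independent, a contradiction. Hence $G[Y]$ is the complete bipartite graph between $\mathcal L$ and $\mathcal R$, together with an independent, $Y$-isolated set $V_{1234}$ of degree-$4$ vertices.

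Step 3 is the count. Partitioning triangles by how many vertices they meet in $N[x]$: exactly $2$ lie in $N[x]$; each vertex of $\mathcal L\cup\mathcal R$ lies in exactly one triangle with two vertices in $N(x)$ and each vertex of $V_{1234}$ in exactly two, contributing $\ell+r+2q$ with $\ell=|\mathcal L|$, $r=|\mathcal R|$, $q=|V_{1234}|=n-5-\ell-r$; each edge of the complete bipartite part gives exactly two triangles with a single vertex in $N(x)$, contributing $2\ell r$; and $G[Y]$ is bipartite, so none lie inside $Y$. Thus $k_3(G)=2n-8-\ell-r+2\ell r$, and since $\ell,r\geq 2$ we have $2\ell r-\ell-r\geq 4$ with equality iff $\ell=r=2$, giving $k_3(G)\geq 2n-4$. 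In the equality case each of the four classes is a single vertex, the complete bipartite part is a $4$-cycle attached to $N(x)$ in exactly the pattern of the support structure defining $H_4(n)$, and $V_{1234}$ is the independent set, so $G\cong H_4(n)$. I expect the main obstacle to be Step 2 — establishing the complete join — since it rests entirely on correctly pinning down which classes $V_S$ must vanish and which classes can be mutually adjacent, so that $N(p)\cap N(u)$ is always independent; once that structural picture is secured, the case analysis and the triangle count are routine.
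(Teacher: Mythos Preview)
Your proof is correct and follows the same overall architecture as the paper's: rule out the path configuration, and in the matching configuration analyze the classes $V_S$ to pin down $G[Y]$ as a complete bipartite graph (plus the isolated $V_{1234}$), then count. The differences are in execution, and they are worth noting.

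For the path case, the paper argues that $x_2$ must be adjacent to every vertex except $x_4$, then finds a $K_4$ from the $x_3x_4$ saturation edge; you instead use the $x_1x_4$ non-edge and show $N(x_1)\cap N(x_4)=\{x\}\cup V_{124}\cup V_{1234}$ is independent. Both are quick.

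For the matching case, the paper imports two facts from \cite{aehk} (that $V_{12}=V_{34}=\emptyset$ and that $G[Y']$ is complete bipartite with parts $V_{123}\cup V_{124}$ and $V_{134}\cup V_{234}$), whereas you supply direct proofs of both. Your Step~2 argument for the complete join --- that $N(p)\cap N(u)$ reduces to two non-adjacent vertices of $N(x)$ --- is exactly the content the paper outsources.

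The counting is where the two genuinely diverge. The paper fixes a $9$-vertex subgraph $G_1'$ with $14$ triangles and then adds the remaining $n-9$ vertices one at a time, observing that each vertex placed in $V_{1234}$ contributes exactly $2$ new triangles and each vertex placed in some $V_{ijk}$ contributes at least $5$; this yields $k_3(G)\ge 14+2(n-9)=2n-4$ with equality forcing all extra vertices into $V_{1234}$. You instead compute $k_3(G)$ \emph{exactly} as $2n-8-\ell-r+2\ell r$ via a global partition by the number of vertices a triangle meets in $N[x]$, and then minimize over $\ell,r\ge 2$. Your route is more self-contained and makes the equality characterization transparent; the paper's incremental method has the advantage of being the same technique it reuses throughout Lemmas~\ref{claim 3 edges} and~\ref{lemma 4 edges}.
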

\begin{proof}[Proof of Lemma \ref{lemma 2 edges}]
There are two possibilities for the edges in $N(x)$.  

\medskip
\noindent
\textit{Case 1:} The two edges in $N(x)$ form a path.
\smallskip

Suppose the edges in $N(x)$ are $x_1 x_2$ and $x_2 x_3$.  
Every vertex in $Y$ must be adjacent to $x_2$ since $N(x) \cap N(y)$ must contain
an edge for all $y \in Y$, and every edge in $N(x)$ has $x_2$ as an endpoint.  
Therefore, $x_2$ is adjacent to all vertices in $G$ except for $x_4$.  
The intersection $N(x_3) \cap N(x_4)$ must contain an edge, 
say $y_1 y_2$.  The edge $y_1 y_2$ must have both endpoints
 in $Y$ since $x$ is the only neighbor of $x_4$ in $N[x]$, 
but then $\{y_1,y_2,x_2 ,x_3 \}$ is a 
 $K_4$ in $G$ ($y_1$ and $y_2$ are both adjacent to $x_2$).   
 This is a contradiction. 

\medskip
\noindent
\textit{Case 2:} The two edges in $N(x)$ share no endpoints.  
\smallskip

Assume the edges in $N(x)$ are $x_1 x_2$ and $x_3 x_4$.    
Then $V_{12}$ and $V_{34}$ must be empty (see \cite{aehk}).  
Since $N(y) \cap N(x)$ must contain an edge for each $y \in Y$ and 
$V_{12} = V_{34} = \emptyset$,
we can partition $Y$ as follows:
\[
Y = V_{123} \cup V_{124} \cup V_{134} \cup V_{234} \cup V_{1234}.
\]
Note that every vertex in $Y$ is adjacent to either both $x_1$ and $x_2$, or to both $x_3$ and $x_4$.

Suppose $y \in V_{1234}$.  We claim that the degree of $y$ is exactly 4.  
If $y$ is adjacent to some
$z \in Y \backslash \{y \}$, then we may assume, without loss of generality,
that $z$ is adjacent to both $x_1$ and $x_2$.  However, this implies 
$\{y,z,x_1,x_2 \}$ is a $K_4$ in $G$ which is a contradiction.
Consequently, $d(y) = 4$ whenever $y \in V_{1234}$, and $y$ lies in
exactly two triangles: $yx_1x_2$ and $yx_3 x_4$.  

We now focus on 
\[
Y' :=  V_{123} \cup V_{124} \cup V_{134} \cup V_{234}.
\]  
The intersection $N(x_1) \cap N(x_3)$ 
must contain an edge $\alpha \beta$.  This edge 
has both endpoints in $Y$
since the only common neighbor of $x_1$ and $x_3$ in $N[x]$ 
is $x$.  
Because $\alpha$ and $\beta$ are adjacent vertices in $Y$, 
neither can be in $V_{1234}$.  Thus, 
$\alpha ,\beta \in V_{123} \cup V_{134}$.  
If $\alpha $ and $\beta$ are both in $V_{123}$, then $\{\alpha , \beta , x_1 ,x_2 \}$ is a $K_4$.
A similar argument shows $\alpha$ and $\beta$ cannot both 
be in $V_{134}$.  We may assume that $\alpha \in V_{123}$
and $\beta \in V_{134}$, which shows $V_{123} \neq \emptyset$ and $V_{134} \neq \emptyset$.
By symmetry, $V_{124}$ and $V_{234}$ are not empty.  

It is proved in \cite{aehk} that $G[Y']$ is a complete bipartite graph with parts
\begin{center}
$V_{123} \cup V_{124}$ and $V_{134} \cup V_{234}$.
\end{center}
This determines the graph $G$ up to the number of vertices in the parts
$V_{123}$, $V_{124}$, $V_{134}$, and $V_{234}$ (which are all not empty),
and $V_{1234}$ (which may be empty).  
 
Let $y_{123} \in V_{123}$, and define $y_{124}$, $y_{134}$, and $y_{234}$ similarly.
The subgraph $G_{1}'$ of $G$ induced by $N[x] \cup \{  y_{123} , y_{124}, y_{134}, y_{234} \}$ has
9 vertices and 14 triangles; it is exactly one of the graphs appearing in the proof of Theorem 8 in \cite{aehk}.  These triangles are
\begin{center}
$x x_1 x_2$, $x x_3 x_4$, $x_1 x_2 y_{123}$, $x_1 x_2 y_{124}$, $x_3 x_4 y_{134}$, $x_3 x_4 y_{234}$
$y_{123} y_{134} x_1$, $y_{123} y_{134} x_3$, 

$y_{134} y_{124} x_1$, $y_{134} y_{124} x_4$,
$y_{124} y_{234} x_2$, $y_{124} y_{234} x_4$,
$y_{123} y_{234} x_2$, $y_{123} y_{234} x_3$.
\end{center}  

We now estimate the number of triangles in $G$ by adding back the remaining $n-9$ vertices in $V(G) \backslash V(G_1')$ 
one by one.  At each step, we count the number of triangles that contain the new added vertex
and vertices in $V(G_1')$.  
By counting triangles in this way, we never count the same triangle more than once.  

If a vertex $y$ is added to $V_{1234}$, then we obtain exactly two new triangles: $y x_1 x_2$ and 
$y x_3 x_4$.  We can add any number of vertices to $V_{1234}$ and the resulting graph 
is $K_4$-saturated.  
Now suppose a vertex $y$ is added to $V_{123}$ (the other cases are the same by symmetry).  
This creates at least five new triangles: $y x_1 x_2$, $y y_{134} x_1$, $y y_{134} x_3$,
$y y_{234} x_2$, and $y y_{234} x_3$.  
The conclusion is that 
\[
k_3 (G) \geq k_3 (G_1') + 2(n-9) = 14 + 2(n-9) = 2n-4.
\]
We have $k_3(G) = 2n-4$ 
if and only if all of the $n-9$ vertices $V(G) \backslash V(G_1')$ are contained in $V_{1234}$. 
This is precisely the graph $H_4(n)$.
\end{proof}

\bigskip

\noindent
\textbf{Method and Notation For Triangle Counting:} The counting method used in the last three paragraphs of the proof of Lemma 
\ref{lemma 2 edges} will be used multiple times in the proof of Lemmas \ref{claim 3 edges} and \ref{lemma 4 edges}.
We find a small subgraph $G_i'$ of $G$, count the number of triangles in $G_i'$, and then count 
the number of triangles created when a vertex $y$ is added to $G_i'$.  The crucial point is that 
when a vertex $y$ is added, we are only counting triangles that contain $y$ and vertices in $G_i'$.  
This means that we never count the same triangle more than once.  
In an effort to be concise, we will always write $y$ for the added vertex, and then list the triangles in the following way.
If $y$ is added to $V_S$ and this creates triangles $y \alpha_1 \beta_1$, $y \alpha_2 \beta_2, \dots , y \alpha_k \beta_k$,
we will write
\[
V_S ~ : ~ y \alpha_1 \beta_1, y \alpha_2 \beta_2, \dots , y \alpha_k \beta_k.
\]
We preface this with the statement ``When $y$ is added to $V_S$,'' and then list the 
triangles containing $y$ using the notation shown above.  

\bigskip

\begin{lemma}\label{claim 3 edges}
Suppose $n \geq 12$. 
If the number of edges in $N(x)$ is 3, then 
\[
k_3 (G) \geq 3n -18.
\]
\end{lemma}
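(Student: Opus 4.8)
The plan is to split on the isomorphism type of the graph $G[N(x)]$. A triangle inside $N(x)$ would form a $K_4$ together with $x$, so $G[N(x)]$ is triangle-free, and a triangle-free graph on four vertices with exactly three edges is either the path $P_4$ or the star $K_{1,3}$; these two cases are handled quite differently, the star being the easy one. So suppose first that $G[N(x)]$ is the star with center $x_1$, i.e.\ its edges are $x_1x_2$, $x_1x_3$, $x_1x_4$. Every $y$ in $Y := V(G) \setminus N[x]$ is adjacent to both endpoints of some edge of $N(x)$, and every such edge contains $x_1$, so $x_1$ is adjacent to all of $Y$; since also $x_1$ is adjacent to $x, x_2, x_3, x_4$, the vertex $x_1$ is adjacent to every other vertex of $G$. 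Three consequences follow. First, $G - x_1$ is triangle-free (a triangle in it would be a $K_4$ with $x_1$), so every triangle of $G$ passes through $x_1$, these triangles correspond to the edges of $G[N(x_1)] = G - x_1$, and hence $k_3(G) = |E(G - x_1)|$. Second, $G - x_1$ is $K_3$-saturated: adding any non-edge $uv$ to $G$ (it necessarily avoids $x_1$) creates a $K_4$, and this $K_4$ must use the edge $uv$ and must contain $x_1$ — otherwise it would be a $K_4$, hence a triangle avoiding $uv$, inside the triangle-free graph $G - x_1$ — so the remaining two vertices of the $K_4$ form a triangle with $uv$ in $(G - x_1) + uv$. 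Third, $\delta(G - x_1) = 3$, the value $3$ attained at $x$ (since $d_G(x) = 4$ and $x \sim x_1$) while every other vertex keeps degree at least $3$. The theorem of Duffus and Hanson that $\textup{sat}_3(m, K_3) = 3m - 15$ for $m \geq 10$ now applies with $m = n - 1 \geq 11$ and gives $|E(G - x_1)| \geq 3(n-1) - 15 = 3n - 18$, whence $k_3(G) \geq 3n - 18$.

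Now suppose $G[N(x)]$ is the path $x_1x_2x_3x_4$; this is the substantial case. After discarding every $V_S$ for which $\{x_i : i \in S\}$ is independent, $Y$ is the disjoint union of the sets $V_{12}, V_{23}, V_{34}, V_{123}, V_{124}, V_{134}, V_{234}, V_{1234}$, each of which is independent (two adjacent vertices in a common $V_S$, together with the endpoints of an edge inside $S$, would form a $K_4$). The first step is to determine which of these sets are nonempty and how they attach to one another. Since $x_1$ and $x_4$ are nonadjacent, $N(x_1) \cap N(x_4)$ contains an edge; both of its endpoints must lie in $Y$, and a short check of the possibilities shows that one endpoint lies in $V_{124}$ and the other in $V_{134}$, so $V_{124} \neq \emptyset$ and $V_{134} \neq \emptyset$. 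Next, a vertex of $V_{1234}$ can have no neighbor in $Y$ (such a neighbor would create a $K_4$ with two consecutive $x_i$'s), so it has degree exactly $4$ and lies in exactly the three triangles $yx_1x_2$, $yx_2x_3$, $yx_3x_4$. I would then record all the relations $V_S \sim V_T$ and $V_S \nsim V_T$ forced by $K_4$-freeness, and apply the Rules Lemma (Lemma \ref{general lemma}) to each nonempty $V_S$ with the unique admissible $i \notin S$; for example this yields $V_{124} \to V_{23} \cup V_{34} \cup V_{134} \cup V_{234}$, and in fact every vertex of $V_{124}$ has all of its neighbors outside $N[x]$ inside that union, and symmetrically for $V_{134}$.

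With this structural description available, the proof concludes by the triangle-counting method introduced after Lemma \ref{lemma 2 edges}: fix a small core subgraph $G_i'$ spanned by $N[x]$ together with one representative of each relevant nonempty $V_S$, compute $k_3(G_i')$, and then reinsert the remaining $n - |V(G_i')|$ vertices one at a time, at each step counting only triangles that contain the newly inserted vertex together with vertices already present, so that no triangle is counted twice. A vertex inserted into $V_{1234}$ always contributes exactly $3$; a vertex inserted into $V_{124}$ contributes the triangle $yx_1x_2$ together with one or two more triangles coming from its forced neighbor in $V_{23} \cup V_{34} \cup V_{134} \cup V_{234}$, and similarly for the remaining sets. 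The argument divides into a handful of subcases according to which of $V_{12}, V_{23}, V_{34}, V_{123}, V_{234}$ are empty. The main obstacle is precisely this bookkeeping: since the path-case graphs actually carry about $3n - 14$ triangles, the target $3n - 18$ leaves only a bounded amount of slack, so one must show that essentially every reinserted vertex contributes $3$. The troublesome vertices are those of $V_{124}$ or $V_{134}$ whose only extra neighbor lies in $V_{23} \cup V_{34}$ and which therefore contribute only $2$ directly; for these one must argue, using $K_4$-saturation, that their $V_{23}$- or $V_{34}$-neighbor then has large degree and hence lies in enough additional triangles to absorb the deficit, and one must order the reinsertion so that those triangles are attributed to the high-degree vertex rather than double-counted.
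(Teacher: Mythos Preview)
Your star case is correct and is exactly the paper's argument.  In the path case your outline has the right shape, but you are missing two structural facts that the paper establishes before any counting, and without them you are led to invent an unnecessary ``absorption'' step that you then do not carry out.

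First, in the path case one has $V_{12}=V_{34}=\emptyset$.  The paper proves this directly: if $y\in V_{12}$ then $y\not\sim x_4$, so $N(y)\cap N(x_4)$ contains an edge $\alpha\beta$ with both endpoints in $Y$; neither endpoint can be adjacent to both $x_1$ and $x_2$ (else a $K_4$ with $y$), so each must pick up some edge of $N(x)$ other than $x_1x_2$, hence is adjacent to $x_3$, and then $\{\alpha,\beta,x_3,x_4\}$ is a $K_4$.  So your list of live parts shrinks to $V_{23},V_{123},V_{124},V_{134},V_{234},V_{1234}$, and your $V_{34}$-branch of the Rule for $V_{124}$ disappears.

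Second --- and this is what dissolves your ``troublesome vertices'' --- one has $V_{124}\sim V_{134}$: every vertex of $V_{124}$ is adjacent to every vertex of $V_{134}$.  (If $y\in V_{124}$ and $w\in V_{134}$ were nonadjacent, any edge in $N(y)\cap N(w)$ would have to lie in $V_{23}$, and two adjacent vertices of $V_{23}$ form a $K_4$ with $x_2,x_3$.)  You said you would ``record all the relations $V_S\sim V_T$''; this is one of them, and it is forced by saturation, not merely by $K_4$-freeness.  Combined with your own observation that $V_{134}\neq\emptyset$, it means that any $y\in V_{124}$ is adjacent to the core representative $y_{134}$ and hence already contributes the three triangles $yx_1x_2$, $yy_{134}x_1$, $yy_{134}x_4$.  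The scenario you worry about, where $y$'s only $Y$-neighbour lies in $V_{23}\cup V_{34}$, simply cannot occur.

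With these two facts in hand, the paper runs a short case analysis on whether $V_{23}$ is empty, with subcases on $V_{123}$ and $V_{234}$; in each subcase a core $G_i'$ is chosen so that every reinserted vertex contributes at least three triangles directly, yielding $k_3(G)\ge 3n-14$ (or better) with no absorption or reordering.  Your final paragraph is therefore both unnecessary and, as written, not a proof.
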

\begin{proof}[Proof of Lemma \ref{claim 3 edges}]
Since $N(x)$ must be triangle free, the three edges in $N(x)$ either form a star or a path of length three.

Suppose first the edges in $N(x)$ are $x_1 x_2$, $x_1 x_3$, and $x_1 x_4$.  
Since every edge in $N(x)$ has $x_1$ as an endpoint and every vertex $y \in Y$ must 
be joined to an edge in $N(x)$, vertex $x_1$ is adjacent to all vertices in $G$. 
Let $H$ be the subgraph of $G$ induced by $N(x_1)$.
Then $H$ is a $K_3$-saturated $(n-1)$-vertex graph with $\delta (H) = 3$.  
Since $n \geq 11$, 
a result of Duffus and Hanson \cite{dh} implies that $H$ has 
at least $3(n-1) -15 = 3n - 18$ edges.  Each of these edges forms a triangle with $x$ and 
so $k_3(G) \geq 3n -18$.  

Assume now the edges in $N(x)$ are $x_1 x_2$, $x_2 x_3$, and $x_3 x_4$.  
First we show that $V_{12}$ and $V_{34}$ are empty.  
We prove this in full, but then 
we will make use of the results proved in \cite{aehk} concerning adjacencies 
between a $V_S$ and a $V_T$. 
If 
$y \in V_{12}$, then $y$ is not adjacent to $x_4$, and so there 
must be an edge $\alpha \beta $ in the intersection $N(y) \cap N (x_4)$.  
Since $y$ and $x_4$ have no common neighbors in $N[x]$,
the endpoints of $\alpha \beta$ lie in $Y$.  If $\alpha$ or $\beta$ is 
adjacent to both $x_1$ and $x_2$,
then either $\{x_1 , x_2 , y , \alpha \}$ or 
$\{ x_1 , x_2 , y , \beta \}$ is a $K_4$.    
The sets $N( \alpha ) \cap N( x)$ and $N( \beta ) \cap N(x)$ 
must contain at least one of the edges in $N(x)$, other than $x_1 x_2$.
All of the edges in $N(x)$, other than $x_1 x_2$, have $x_3$ as an endpoint
and so $\alpha$ and $\beta$ are both adjacent to $x_3$, but then
$\{ \alpha , \beta , x_3 , x_4 \}$ is a $K_4$.  The conclusion is that 
$V_{12} = \emptyset$.  By symmetry, $V_{34} = \emptyset$.  

As in the proof of Lemma \ref{lemma 2 edges}, 
if $y \in V_{1234}$, then $d(y) = 4$ and $N(y) = \{x_1 ,x_2 ,x_3 ,x_4 \}$.  
Let $Y' = Y  \backslash V_{1234}$.  Because $V_{12} = V_{34} = \emptyset$, we have
\begin{equation}\label{3 edge 1}
Y' := V_{23} \cup V_{123} \cup V_{124} \cup V_{134} \cup V_{234}.
\end{equation}

If $y$ and $z$ are 
adjacent vertices with $y,z \in  V_{23} \cup V_{123} \cup V_{234}$,
then $\{ y , z , x_2 , x_3 \}$ is a $K_4$.  Thus,
\begin{equation}\label{3 edge 2}
V_{23} \nsim V_{123} ~~~~~ \mbox{and} ~~~~~ V_{23} \nsim V_{234} 
~~~~~\mbox{and}~~~~~ V_{123} \nsim V_{234}.
\end{equation}
Similar arguments show that 
\[
V_{123} \nsim V_{124}, V_{134} \nsim V_{234}, V_{23} \sim V_{124}, V_{23} \sim V_{134}, 
V_{123} \sim V_{134}, V_{124} \sim V_{134} ~\mbox{and}~ V_{124} \sim V_{234}.
\]
Summarizing, $V_S \sim V_T$ if the intersection $S \cap T$ does not 
contain one of the pairs $\{1,2 \}$, $\{2,3 \}$, or $\{3,4 \}$.  Also,
$V_S \nsim V_T$ if $S \cap T$ contains at least 
one of the pairs $\{ 1,2 \}$, $\{2,3 \}$, or $\{3,4 \}$.  
We represent this using a graph.
A solid edge between
$V_S$ and $V_T$ indicates $V_S \sim V_T$, a dashed edge indicates $V_S \nsim V_T$,
and no edge between $V_S$ and $V_T$ indicates that it is possible for a vertex in $V_S$ to be 
adjacent or not adjacent to vertex in $V_T$ (this last possibility does not occur 
here, but will occur in the proof of Lemma \ref{lemma 4 edges}).  

\begin{center}

\begin{tikzpicture}[scale=.9]

\draw[dashed] (0,2) -- (1,0);
\draw (0,2) -- (3,0);
\draw[dashed]  (0,2) -- (4,2);
\draw[dashed] (0,2) -- (2,3);

\draw (1,0) -- (3,0);
\draw (1,0) -- (4,2);
\draw (1,0) -- (2,3);

\draw (2,3) -- (3,0);
\draw[dashed] (2,3) -- (4,2);

\draw[dashed] (3,0) -- (4,2);

\draw[fill=black] (0,2) circle (0.07cm);
\draw[fill=black] (1,0) circle (0.07cm);
\draw[fill=black] (2,3) circle (0.07cm);
\draw[fill=black] (3,0) circle (0.07cm);
\draw[fill=black] (4,2) circle (0.07cm);

\draw (-.1,2) node[left]{$V_{123}$};
\draw (1,-.1) node[below]{$V_{124}$};
\draw (3,-.1) node[below]{$V_{134}$};
\draw (4.1,2) node[right]{$V_{234}$};
\draw (2,3.1) node[above]{$V_{23}$};

\end{tikzpicture}

Adjacencies among the $V_S$ and $V_T$
\end{center}

From Lemma \ref{general lemma}, we have that two Rules for this graph are 
\begin{center}

\begin{tikzpicture}[scale=.6]

\draw[->] (-.1,1.9) -- (2.7,0);

\draw (-.1,2) node[left]{$V_{123}$};
\draw (.5,-.1) node[below]{$V_{124}$};
\draw (3.5,-.1) node[below]{$V_{134}$};
\draw (4.1,2) node[right]{$V_{234}$};
\draw (2,3.1) node[above]{$V_{23}$};


\draw[->] (1+9,0) -- (2.9+9,0);
\draw[->] (1+9,0) -- (4+9,2);
\draw[->] (1+9,0) -- (2+9,3);

\draw (-.1+9,2) node[left]{$V_{123}$};
\draw (1+8.5,-.1) node[below]{$V_{124}$};
\draw (3+9.6,-.1) node[below]{$V_{134}$};
\draw (4.1+9,2) node[right]{$V_{234}$};
\draw (2+9,3.1) node[above]{$V_{23}$};

\end{tikzpicture}

Rule 1 (on the left) and Rule 2 (on the right)
\end{center}
By symmetry (which is highlighted by the way we have chosen to show Rules 1 and 2), 
we have $V_{234} \rightarrow V_{124}$ and $V_{134} \rightarrow V_{23} \cup V_{123} \cup V_{124}$.  
We also refer to the assertion $V_{234} \rightarrow V_{124}$ as Rule 1, and 
the assertion $V_{134} \rightarrow V_{23} \cup V_{123} \cup V_{124}$ as Rule 2.  

We take a moment to carefully show how Lemma \ref{general lemma} gives Rule 2.  
Choosing $S = \{1,2,4 \}$ and 
$i = 3$, we find all $T$ for which $ 3 \in T$, and $|T \cap \{1,2 \} | \leq 1$.   
These two conditions fail for $\{1,2,3 \}$, but hold for $\{2,3 \}$, $\{1,3,4 \}$, and $\{2,3,4 \}$.  
Thus, $V_{124} \rightarrow V_{23} \cup V_{134} \cup V_{234}$.  

The structure of $G$ is now determined up to the sizes of the parts
$V_{1234}$, $V_{23}$, $V_{123}$, $V_{124}$, $V_{134}$, and $V_{234}$.

If all of the vertices in $Y$ belong to $V_{1234}$, then 
$k_3 (G) \geq 3 + 3( n- 5) = 3n - 12$
and we are done.  Let $Y' := Y \backslash V_{1234}$ and assume $Y' \neq \emptyset$.

\medskip
\noindent
\textit{Case 1:} $V_{23} \neq \emptyset$.

\medskip
Let $y_{23} \in V_{23}$.  Since $\delta (G) \geq 4$, $y_{23}$ has at least two other neighbors
aside from $x_2$ and $x_3$.  Let $z_1$, $z_2$ be two other neighbors of $y_{23}$.  These two neighbors
must be in the union $V_{124} \cup V_{134}$.  

First suppose $z_1:= z_{124}  \in V_{124}$ and $z_2 : = z_{134} \in V_{134}$.    
Let $G_2'$ be the subgraph of $G$ induced by $N[x] \cup \{y_{23},z_{124} ,z_{134} \}$.  This subgraph
has 8 vertices and 11 triangles (see the Appendix for the list of triangles).  
When $y$ is added to $V_S$, 
\begin{center}
$V_{23} ~ : ~ y x_2 x_3, y z_{124} x_2, y z_{134} x_3$, $y z_{124} z_{134}$ 

\smallskip

$V_{123} ~ : ~ y x_1 x_2, y x_2 x_3 , y x_1 z_{134} ,y x_3 z_{134}$ ($V_{234}$ is the same by symmetry)

\smallskip

$V_{124} ~ : ~ y x_1 x_2, y x_2 y_{23}, y z_{134} y_{23}$ ($V_{134}$ is the same by symmetry)
\end{center}
The conclusion is that 
\[
k_3 (G) \geq k_3 (G_2') + 3(n-8) = 11 + 3 ( n - 8) = 3n - 13.
\]  

Now suppose that $z_1:=z_{124}$ and $z_2:=z_{124}'$ are in $V_{124}$, and 
that $V_{134} = \emptyset$.  
If $V_{123} \neq \emptyset$, then by Rule 2, $V_{123} \rightarrow V_{134}$ implying $V_{134} \neq \emptyset$,
a contradiction.  Therefore, $V_{123} = \emptyset$.  

We consider two subcases.
\begin{center}
\textit{Subcase 1:} $V_{234} \neq \emptyset$.
\end{center}
Let $z_{234} \in V_{234}$ and $G_3'$ be the subgraph of $G$ induced by
$N[x] \cup \{ y_{23} , z_{124} , z_{124}' , z_{234} \}$.  
This subgraph has 9 vertices and 14 triangles.  
When $y$ is added to $V_S$,
\begin{center}
$V_{23} ~ : ~ y x_2 x_3, y z_{124} x_2, y z_{124}' x_2$ 
~~~~~
$V_{124} ~ : ~ y x_1 x_2, y y_{23} x_2, y z_{234} x_2, y z_{234} x_4$  

$V_{234} ~ : ~ y x_2 x_3, y x_3 x_4, y z_{124} x_2, y z_{124}' x_2$
\end{center}
We conclude that $k_3(G) \geq k_3(G_3') + 3 (n-9) =  14 + 3( n- 9) =  3n-13$.    

\medskip
\begin{center}
\textit{Subcase 2:} $V_{234} = \emptyset$.
\end{center}

The neighborhood $x_1 x_4$ must contain an edge, say $\alpha \beta$.  This edge cannot 
use the vertex $x$ since $x_1$ is not adjacent to $x_3$, and $x_4$ is not adjacent to $x_2$.
Furthermore, neither endpoint is in $V_{23}$ because $x_1$ is not adjacent to any vertex in
 $V_{23}$.  We conclude that the endpoints of $\alpha$ and $\beta$ must both be in
 $V_{124}$, but then $\{ \alpha , \beta , x_1 ,x_2 \}$ is a $K_4$.

\bigskip
\noindent
\textit{Case 2:} $V_{23} = \emptyset$.

\medskip

Here we will consider two subcases.
\begin{center}
\textit{Subcase 1:} $V_{123} \neq \emptyset$
\end{center}
Let $y_{123} \in V_{123}$.  By Rule 1, 
$V_{123} \rightarrow V_{134}$ so there is a $y_{134} \in V_{134}$,
and $y_{123}$ is adjacent to $y_{134}$.

If $V_{234} \neq \emptyset$, say $y_{234} \in V_{234}$,
then by Rule 1, $V_{234} \rightarrow V_{124}$.   Let $ y_{124} \in V_{124}$ 
be a neighbor of $y_{234}$.  
Let
$G_4'$ be the subgraph of $G$ induced by
$N[x] \cup \{  y_{123} , y_{134} ,y_{234}, y_{124} \}$.
This graph has 9 vertices and 15 triangles.  When $y$ is added to $V_S$, 
\begin{center}
$V_{123} ~:~ y x_1 x_2, y x_2 x_3, y y_{134} x_1, y y_{134} x_3$ ($V_{234}$ is the same by symmetry)

$V_{134} ~:~ y x_3 x_4, y y_{123} x_1, y y_{123} x_3, y y_{124} x_4$ ($V_{124}$ is the same by symmetry)
\end{center}
Thus,
\[
k_3 (G) \geq k_3 (G_4' ) + 4 (n-9) =  15+ 4(n-9)  = 4n - 21 \geq 3n-14.
\] 

Now suppose $V_{234} = \emptyset$.   
Let $G_5' $ be the subgraph induced by $N[x] \cup \{ y_{123} ,y_{134} \} $.  The graph $G_5'$ has 7 vertices and 8 triangles.
When $y$ is added to $V_S$,
\begin{center}
$V_{123} ~:~ y x_1 x_2, y x_2 x_3, y y_{134} x_1, y y_{134} x_3$
~~~~~
$V_{134} ~:~ y  x_3 x_4, y y_{123} x_1, y y_{123} x_3$

\smallskip

$V_{124} ~:~ y x_1 x_2, y y_{134} x_1, yy_{134} x_4$
\end{center}  
Thus, $k_3 (G) \geq k_3 ( G') + 3(n-7) = 8 + 3 ( n- 7 )  = 3n - 13$.

\medskip
\begin{center}
\textit{Subcase 2:} $V_{123} = \emptyset$. 
\end{center}

By symmetry, we may assume $V_{234} = \emptyset$ (otherwise we are 
back in Subcase 1 with $V_{234}$ replacing $V_{123}$).  Then all of the vertices of $G$ not in 
$N[x]$ must be in $V_{124} \cup V_{134} \cup V_{1234}$.   Let $y_{124} \in V_{124}$.
By Rule 1, $V_{124} \rightarrow V_{23} \cup V_{234} \cup V_{134}$, but
$V_{23} = V_{234} = \emptyset$.  Thus, there is a $y_{134} \in V_{134}$.  If we take 
$G_6' $ to be the subgraph of $G$ induced by $N[x] \cup \{ y_{124} ,y_{134} \}$, then 
$G_6'$ has 6 vertices and 7 triangles.  
When $y$ is added to $V_S$,
\begin{center}
$V_{124} ~ :~ y x_1 x_2, y y_{134} x_1 , y y_{134} x_4$ ($V_{134}$ is the same by symmetry)
\end{center}
Therefore, $k_3 (G) \geq 3 (n - 7 ) + 7 = 3n -14$.
\end{proof}

\begin{lemma}\label{lemma 4 edges}
If $n \geq 15$ and the number of edges in $N(x)$ is 4, then 
\[
k_3 (G) \geq 2n-3.
\]
\end{lemma}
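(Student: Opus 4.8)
Since $N(x)$ is $K_3$-free and a triangle-free graph on four vertices has at most four edges, the four edges of $N(x)$ must form a $4$-cycle; I relabel so that this cycle is $x_1x_2x_3x_4x_1$, with $x_1x_3$ and $x_2x_4$ the two non-edges. Every $y\in Y$ is then adjacent to the two endpoints of some cycle edge, so $Y$ is partitioned into
\[
V_{12}\cup V_{23}\cup V_{34}\cup V_{14}\cup V_{123}\cup V_{124}\cup V_{134}\cup V_{234}\cup V_{1234},
\]
and $x$ lies in the four triangles $xx_1x_2$, $xx_2x_3$, $xx_3x_4$, $xx_4x_1$. Whenever $S\cap T$ contains a cycle edge we have $V_S\nsim V_T$, since two adjacent vertices in $V_S\cup V_T$ together with that edge would give a $K_4$; in particular each of the nine sets is independent, $V_{1234}$ is anticomplete to $Y\setminus V_{1234}$, and hence every $v\in V_{1234}$ has degree exactly $4$, neighborhood $N(x)$, and lies in exactly the four triangles $vx_1x_2,vx_2x_3,vx_3x_4,vx_4x_1$.

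\textbf{The rules.} Exactly as in the proof of Lemma \ref{claim 3 edges}, I record the adjacency pattern among the $V_S$ in a diagram and apply Lemma \ref{general lemma} to obtain, for each nonempty $V_S$ with $|S|\le 3$ and each $i\notin S$, a short list of sets such that every $y\in V_S$ has a neighbor in their union (so in particular at least one of them is nonempty). For example, $V_{12}\rightarrow V_{23}\cup V_{34}\cup V_{134}\cup V_{234}$ and $V_{12}\rightarrow V_{14}\cup V_{34}\cup V_{134}\cup V_{234}$, while $V_{123}\rightarrow V_{14}\cup V_{34}\cup V_{134}$; all remaining rules follow from these by the dihedral symmetry of the $4$-cycle.

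\textbf{Counting.} If $Y=V_{1234}$ then $k_3(G)\ge 4+4(n-5)=4n-16\ge 2n-3$ since $n\ge 15$, and we are done. Otherwise set $Y':=Y\setminus V_{1234}\ne\emptyset$ and run a case analysis on which of the eight sets $V_S$ with $|S|\in\{2,3\}$ are nonempty, using the dihedral symmetry to cut down the cases and using the rules together with $\delta(G)=4$ to eliminate most of them (for instance a single nonempty $V_S$ with $|S|=2$ is impossible, since such vertices need two further neighbors while $V_S$ is independent, and the rules then force still more sets to be nonempty). In each surviving case I exhibit, as in Lemmas \ref{lemma 2 edges} and \ref{claim 3 edges}, a base subgraph $G_i'$ obtained from $N[x]$ by adjoining a bounded number of witness vertices supplied by the rules and the degree condition, compute $k_3(G_i')$ directly, and then use the triangle-counting method to check that adjoining any further vertex to a set $V_S$ creates at least two new triangles inside the growing subgraph — at least three when $V_S$ is one of the three-element sets and exactly four when $V_S=V_{1234}$. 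Since $G_i'\supseteq N[x]$, summing these contributions gives $k_3(G)\ge k_3(G_i')+2\bigl(n-|V(G_i')|\bigr)$, and with the slack from the base count (and from the three- and four-element sets) this will come out to $k_3(G)\ge 2n-3$ in every case.

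\textbf{Main obstacle.} The real work is the case analysis and, within it, squeezing out the extra ``$+1$'' over the bound $2n-4$ that a bare two-triangles-per-vertex estimate would produce. A vertex $y$ added to a two-element set $V_S$ contributes only the single triangle $yx_ix_{i+1}$ from $N(x)$, so its second triangle must come from a rule-supplied neighbor; this is immediate when that neighbor lies in a three- or four-element set or in an ``adjacent'' two-element set, but the delicate point is the configuration in which the rules only place it in the ``opposite'' two-element set (the analogue of $V_{34}$ for $V_{12}$). There one has to dig further: if \emph{all} of $y$'s $Y$-neighbors were in that opposite set, then adding the non-edge $yx_j$ (for the non-neighbor $x_j$ of $y$ among $N(x)$) could not create a $K_4$, because $N(y)\cap N(x_j)$ would be an independent set, contradicting saturation; hence $y$ has a ``useful'' neighbor after all and lies in two triangles. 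Threading this through every pattern of nonempty $V_S$'s, and choosing each base subgraph $G_i'$ so that the accumulated per-vertex slack genuinely reaches $2n-3$ rather than $2n-4$, is the crux.
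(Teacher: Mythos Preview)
Your outline matches the paper's strategy in the easy cases, but it breaks down precisely in the subcase that determines the bound $2n-3$: the case in which all four two-element classes $V_{12},V_{23},V_{34},V_{41}$ are nonempty (and possibly all three-element classes are empty). There your base-subgraph method cannot be made to yield two new triangles per added vertex. A vertex $y\in V_{12}$ contributes only $yx_1x_2$ from $N(x)$; your ``main obstacle'' argument correctly shows that $y$ has some neighbour $z\in V_{23}\cup V_{41}$, giving a second triangle $yzx_2$ or $yzx_1$ in $G$, but that $z$ need not lie in any fixed base $G_i'$, nor need it have been ``already added'' in a growing-subgraph scheme. If you try to close this by a direct global count instead, you recover exactly the paper's Claim~\ref{X claim}: $4+|Y'|+\bigl(e_{12,23}+e_{23,34}+e_{34,41}+e_{41,12}\bigr)\ge 4+(n-5)+(n-5)=2n-6$, using only that each vertex of a two-class has a neighbour in each adjacent two-class; this falls three triangles short of $2n-3$, and your argument supplies nothing further.

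The paper fills this gap with an ingredient your proposal does not mention: Rule~5 (imported from \cite{aehk}), which says that the subgraph induced on $V_{12}\cup V_{23}\cup V_{34}\cup V_{41}$ is a $K_4$-saturated $4$-partite graph with respect to these parts. This forces additional triangles entirely inside $Y'$ with no vertex in $N(x)$: any four vertices, one from each class, span a $K_4-e$ and hence two triangles, and a fifth vertex in some class yields a third. The paper then splits according to whether $|S|=4$ or $|S|\ge 5$ and, in the former, does a further split on which edge of the $K_4-e$ is missing. None of this is recoverable from your rules or your saturation-with-$x_j$ trick, so as written the proposal has a genuine gap in this subcase; your remark that the crux is ``$2n-3$ rather than $2n-4$'' understates it, since the deficit here is $3$, not $1$.
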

\begin{proof}[Proof of Lemma \ref{lemma 4 edges}]
Since $G$ is $K_4$-free, $N(x)$ must be triangle-free and so the four edges in $N(x)$ form
a $C_4$.  Assume $x_1 x_2$, $x_2 x_3$, $x_3 x_4$, and $x_4 x_1$ are the four 
edges in $N(x)$.  
The set 
$Y$ can be partitioned into the disjoint union
\[
Y = V_{12} \cup V_{23} \cup V_{34} \cup V_{14} \cup V_{123} \cup V_{124} \cup V_{134} \cup V_{234} \cup V_{1234}.
\]
The first step is to deal with vertices in $V_{1234}$.  If $y \in V_{1234}$, then 
$y$ cannot have a neighbor in $Y$, otherwise we obtain a $K_4$.  Thus, $N(y) = \{ x_1 ,x_2 ,x_3 ,x_4 \}$
for all $y \in V_{1234}$.  Such a vertex lies in four triangles: $y x_1x_2$, $y x_2 x_3$, $y x_3 x_4$,
and $y x_4 x_1$.  
We will therefore assume that $V_{1234} = \emptyset$ (if $G'$ is the subgraph of $G$ obtained by
removing the vertices in $Y_{1234}$ and we can prove the 
result for $G'$, then the result easily follows for $G$).  

For the rest of the proof, we focus on 
\[
Y' := V_{12} \cup V_{23} \cup V_{34} \cup V_{14} \cup V_{123} \cup V_{124} \cup V_{134} \cup V_{234}.
\]

As stated in \cite{aehk}, the following relationships hold among these 8 sets.  Recall a solid/dashed edge
indicates that all vertices of $V_S$ are adjacent/not adjacent to all vertices of $V_T$.

\begin{center}

\begin{tikzpicture}[scale=.6]

\draw[dashed] (0,4) -- (4,0);
\draw[dashed] (0,4) -- (3,3);
\draw (0,4) -- (5,3);
\draw (0,4) -- (8,4);
\draw (0,4) --(5,5);
\draw[dashed] (0,4) -- (3,5);
\draw[dashed] (0,4) -- (4,8);

\draw[dashed] (4,0) -- (3,3);
\draw (4,0) -- (3,5);
\draw (4,0) -- (4,8);
\draw (4,0) -- (5,5);
\draw[dashed] (4,0) -- (5,3);
\draw[dashed] (4,0) -- (8,4);

\draw[dashed] (8,4) -- (5,3);
\draw[dashed] (8,4) -- (2,4);
\draw (8,4) -- (3,3);
\draw (8,4) -- (3,5);
\draw[dashed] (8,4) -- (4,8);
\draw[dashed] (8,4) -- (5,5);

\draw[dashed] (4,8) -- (3,5);
\draw (4,8) -- (3,3);
 \draw (4,8) -- (5,3);
\draw[dashed] (4,8) -- (5,5);

\draw (0,4) node[left]{$V_{412}$};
\draw (4,8) node[above]{$V_{123}$};
\draw (8,4) node[right]{$V_{234}$};
\draw (4,0) node[below]{$V_{341}$};

\draw (2.55,2.6) node{$V_{41}$};
\draw (2.55,5.4) node{$V_{12}$};
\draw (5.45,5.4) node{$V_{23}$};
\draw (5.45,2.6) node{$V_{34}$};

\end{tikzpicture}

\end{center}
The only missing edges in this figure are those with endpoints in $\{ V_{12}, V_{23}, V_{34},V_{41} \}$. At this 
stage, we do not have enough information to determine adjacencies between these sets.  

Next, we apply Lemma \ref{general lemma} to obtain Rules 3 and 4. 
\begin{center}

\begin{tikzpicture}[scale=1]

\draw [->] (1,3)--(2.2,3);
\draw [->] (1,3)--(3.1,2);

\draw [->] (.65,2.75)--(.65,1.3);
\draw [->] (.65,2.75)--(1.65,.3);

\draw[fill=black] (0,2) node[left]{$V_{412}$};
\draw[fill=black] (1,1) node[left]{$V_{41}$};
\draw[fill=black] (1,3) node[left]{$V_{12}$};
\draw[fill=black] (2,0) node[left]{$V_{341}$};
\draw[fill=black] (2,4) node[left]{$V_{123}$};
\draw[fill=black] (3,1) node[left]{$V_{34}$};
\draw[fill=black] (3,3) node[left]{$V_{23}$};
\draw[fill=black] (4,2) node[left]{$V_{234}$};

\draw [->] (1.6+6,3.7)--(.8+6,1.3);
\draw [->] (1.6+6,3.7)--(1.6+6,.3);
\draw [->] (1.6+6,3.7)--(2.5+6,1.3);

\draw[fill=black] (0+6,2) node[left]{$V_{412}$};
\draw[fill=black] (1+6,1) node[left]{$V_{41}$};
\draw[fill=black] (1+6,3) node[left]{$V_{12}$};
\draw[fill=black] (2+6,0) node[left]{$V_{341}$};
\draw[fill=black] (2+6,4) node[left]{$V_{123}$};
\draw[fill=black] (3+6,1) node[left]{$V_{34}$};
\draw[fill=black] (3+6,3) node[left]{$V_{23}$};
\draw[fill=black] (4+6,2) node[left]{$V_{234}$};

\end{tikzpicture}

Rule 3 (on the left) and Rule 4 (on the right)
\end{center}
Again, the pictures are drawn to highlight the symmetry, and we use Rule 3 and Rule 4
accordingly (for example, the assertion $V_{412} \rightarrow V_{23} \cup V_{234} \cup V_{34}$ is also
called Rule 4).  Rule 3 is obtained by applying Lemma \ref{general lemma} with $S = \{1,2 \}$ and 
$i = 3$, which gives $V_{12} \rightarrow V_{23} \cup V_{234}$, and then
again with $S = \{ 1 ,2 \}$ with $i =4$, which gives $V_{12} \rightarrow V_{41} \cup V_{341}$.  

The proof of Lemma \ref{lemma 4 edges} from this point forward will be divided into cases.

\medskip
\noindent
\textit{Case 1:} $V_{12} \cup V_{23} \cup V_{34} \cup V_{14} = \emptyset$.

\medskip

Since $G$ has more than 5 vertices, we can assume there is some $y_{123} \in V_{123}$.  
By Rule 4, $V_{123} \rightarrow V_{41} \cup V_{341} \cup V_{34}$, 
but $V_{41} = V_{34} = \emptyset$, so $y_{123}$ has a neighbor $y_{134} \in V_{134}$.
Hence, we know that $V_{123} \neq \emptyset$ and $V_{134} \neq \emptyset$.  

 \medskip
\begin{center}
\textit{Subcase 1:}  $V_{234} \cup V_{124}= \emptyset$. 
\end{center}

Let $G_7'$ be the subgraph of $G$ induced by $N[x] \cup \{ y_{123} , y_{134} \}$.  Then $G_7'$ has 
7 vertices and 10 triangles.  When $y$ is added to $V_S$,   
\begin{center}
$V_{123} ~ : ~ y x_1 x_2 , y x_2 x_3 , y y_{134} x_1 , y y_{134} x_3$ ~~ ($V_{341}$ is the same by symmetry)
\end{center}
Thus, $k_3 (G) \geq 10 +  4 (n-7) = 4n -18 \geq 2n-2$.

 \medskip
\begin{center}
\textit{Subcase 2:} All of $V_{123}$, $V_{134}$, $V_{234}$, and $V_{124}$ are not empty.
\end{center}

Before dealing with Subcase 2, we note that Subcases 1 and 2 do cover all possibilities.  
This is because if $V_{234} \neq \emptyset$ (or $V_{124} \neq \emptyset$), then Rule 4 and 
the fact that $V_{12} \cup V_{23} \cup V_{34} \cup V_{41} = \emptyset$ implies 
$V_{124} \neq \emptyset$ (or $V_{234} \neq \emptyset$).  Therefore, 
it cannot be the case that exactly one of $V_{123}$, $V_{134}$, $V_{234}$, $V_{124}$ is empty.

Assume $y_{234} \in V_{234}$ and $y_{412} \in V_{412}$.  
Let $G_8'$ be the subgraph of $G$ induced by 
$ N[x] \cup \{  y_{123} , y_{234} , y_{134} , y_{412} \}$. 
This graph has 9 vertices and 16 triangles.  When $y$ is added to $V_S$,
\begin{center}
$V_{123} ~ : ~ y x_1 x_2 , y x_2 x_3 , y y_{134} x_1 , y y_{134} x_3$ ~~ ($V_{234}$, $V_{134}$, $V_{412}$ are the
same)
\end{center}
Thus, $k_3 (G) \geq 16 + 4( n-9) = 4n -20 \geq 2n-2$.

\medskip
\noindent
\textit{Case 2:} $V_{12} \cup V_{23} \cup V_{34} \cup V_{14} \neq \emptyset$.

\medskip 
 For Case 2 we need an additional rule which is proved in \cite{aehk}.

\bigskip
\noindent
\textbf{Rule 5:} If $F$ is the subgraph of $G$ induced by $V_{12} \cup V_{23} \cup V_{34} \cup V_{14}$,
then $F$ is a 4-partite graph that is $K_4$-saturated with respect to the parts.

\bigskip

In particular, Rule 5 implies that if one of the four parts $V_{12},V_{23},V_{34},V_{14}$ is empty, 
then the remaining parts induce a complete
1-partite, 2-partite, or 3-partite graph depending on the number of nonempty parts.

\begin{center}
\textit{Subcase 1:} $V_{12} \neq \emptyset$ and $V_{23} = V_{34} = V_{14} = \emptyset$.  
\end{center}

Let $y_{12} \in V_{12}$.  By Rule 3,
$V_{12} \rightarrow V_{41} \cup V_{341}$ and 
$V_{12} \rightarrow V_{23} \cup V_{234}$, but
 $V_{41} = V_{23} = \emptyset$ so
there must be vertices $y_{234} \in V_{234}$ and 
$y_{134} \in V_{134}$ that are both adjacent to $y_{12}$.
Let $G_9'$ be the subgraph of $G$ induced 
by $N[x] \cup \{  y_{12} , y_{234} , y_{134} \}$.  
Then $G'$ has 11 triangles and 8 vertices.  
When $y$ is added to $V_S$,
\begin{center}
$V_{12} ~ :  ~  y x_1 x_2, y y_{234}x_2,  y y_{134} x_1$
~~~~~~~~~~
$V_{123} ~ : ~ y x_1 x_2, y x_2 x_3, y y_{134} x_1$ ($V_{412}$ is the same)

\smallskip

$V_{234} ~ : ~ y x_2 x_3 , y x_3 x_4 , y y_{12} x_2$ ($V_{341}$ is the same)

\end{center}
We conclude that 
 \[
 k_3 (G) \geq k_3 (G_9') + 3 ( n - 8) = 11 + 3(n-8) = 3n - 13 \geq 2n-2
 \]
 where we have used the assumption $n \geq 15$ for the last inequality.  
 
\begin{center}
\textit{Subcase 2:} $V_{12} \neq \emptyset$ and $V_{23} \neq \emptyset$, $V_{34} = V_{14} = \emptyset$.  
\end{center}

Let $y_{12} \in V_{12}$ and $y_{23} \in V_{23}$. 
By Rule 5, any vertex in $V_{12}$ will be adjacent to all vertices in $V_{23}$.  
By Rule 3, $V_{12} \rightarrow V_{41} \cup V_{341}$ so, since $V_{41} = \emptyset$, 
there is a vertex $y_{134} \in V_{134}$.  This vertex must be adjacent to both $y_{12}$ and $y_{23}$.  
Let $G_{10}'$ be the subgraph of $G$ induced by $N[x] \cup \{  y_{12},y_{23} , y_{134} \}$.
Then $G_{10}'$ has 8 vertices and 12 triangles.  When $y$ is added to $V_S$,
 \begin{center}
 $V_{12}$~:~ $y x_1 x_2$ $y y_{23} x_2$, $y y_{134} x_1$ ($V_{23}$ is the same),
 
 \smallskip
 
 $V_{123}$: $y x_1 x_2$, $y x_2 x_3$, $y y_{134} x_1$,$y y_{134} x_4$~~~~~~~~~~
  $V_{134}$ :$yx_1 x_4$, $yx_3 x_4$, $y y_{12} x_1$,
 
 \smallskip
 
 $V_{234 }$: $y x_2 x_3$, $yx_3 x_4$, $y y_{12} x_2$ ($V_{412}$ is the same).
 \end{center}
 Therefore,
 $k_3(G) \geq 12 + 3(n-8) = 3n - 12 \geq 2n-2$.

\begin{center}
\textit{Subcase 3:} $V_{12} \neq \emptyset$ and $V_{34} \neq \emptyset$, $V_{23} = V_{14} = \emptyset$.  
\end{center}

Let $y_{12} \in V_{12}$ and $y_{34} \in V_{34}$.  By Rule 3,  
$V_{12} \rightarrow V_{23} \cup V_{234}$, $V_{12} \rightarrow V_{41} \cup V_{341}$,
but $V_{23} = V_{14} = \emptyset$.  Thus, 
there are $y_{134} \in V_{134}$ and $y_{234} \in V_{234}$ with 
both $y_{134}$ and $y_{234}$ adjacent to $y_{12}$.  
Also by Rule 3, $V_{34} \rightarrow V_{23} \cup V_{123}$ and
$V_{34} \rightarrow \cup V_{41} \cup V_{412}$.  This implies 
there are vertices $y_{123} \in V_{123}$ and $y_{412} \in V_{412}$
where both of these vertices are adjacent to $y_{34}$.  Let $G_{11}'$ be 
the subgraph of $G$ induced by 
$N[x] \cup \{ y_{12}, y_{34} , y_{134}, y_{234} , y_{123} , y_{412} \}$.  
Then $G'$ has 11 vertices and has 22 triangles.  When $y$ is added to $V_S$,
 \begin{center}
$V_{12}$: $y x_1 x_2$ $y y_{134} x_1$, $y y_{234} x_2$ ($V_{34}$ is the same),
 
 \smallskip

 $V_{123}$: $y x_1 x_2$, $y x_2 x_3$, $y y_{34} x_3$,
 $y y_{134} x_1$, $y y_{134} x_3$
  ($V_{234}$, $V_{134}$,
 and $V_{124}$ are the same).
 \end{center}
Thus, $k_3 (G) \geq k_3 (G_{11}') + 3 ( n - 11) = 3n - 11 \geq 2n-2$.

\begin{center}
\textit{Subcase 4:} $V_{12} \neq \emptyset$ , $V_{23} \neq \emptyset$, $V_{34} \neq \emptyset$, $V_{14} = \emptyset$.  
\end{center}

Let $y_{12} \in V_{12}$, $y_{23} \in V_{23}$, and $y_{34} \in V_{34}$.  
Note that $V_{12} \cup V_{23} \cup V_{34}$ is a complete 3-partite graph by Rule 5.
By Rule 3, $y_{34} \rightarrow V_{14} \cup V_{124}$, but $V_{14} = \emptyset$.
Therefore, there is a $y_{124} \in V_{124}$ and this vertex is adjacent to
both $y_{34}$ and $y_{23}$.  
Let $G_{12}'$ be the subgraph of $G$ induced by 
$N[x] \cup \{ y_{12}, y_{23}, y_{34} , y_{124} \}$.  
Then $G_{12}'$ has 9 vertices and has 15 triangles.  When $y$ is added to $V_S$,
 \begin{center}
$V_{12}$~:~ $y x_1 x_2$ $y y_{23} x_2$, $y y_{23} y_{34}$ 
~~~
$V_{23}$~:~ $y x_2 x_3$, $y y_{34} x_3$, $y y_{124} x_2$, $y y_{34} y_{12}$, 
$y y_{34} y_{124}$ 

\smallskip

$V_{34}$~:~ $y x_3 x_4$, $y y_{23} x_3$, $y y_{124} x_4$, $y y_{23} y_{12} ,y y_{23} y_{124}$  
~~~~~ 
$V_{123}$~:~$y x_1 x_2$ $y x_2 x_3$, $y y_{34} x_3$ 

\smallskip
$V_{234}$~:~ $y x_2 x_3$, $y x_3 x_4$, $y y_{12} y_2$,
~~~~~
$V_{124}$~:~$y x_1 x_2$, $y x_1 x_4$, $y y_{23} x_2$, $y y_{34} x_4$

\smallskip

$V_{341}$: $y x_3 x_4$, $y x_1 x_4$, $y y_{12} x_1$.
 \end{center}
Hence, $k_3 (G) \geq k_3 (G_{12}') + 3 ( n - 9) = 3n - 12$.

\begin{center}
\textit{Subcase 5:} Each of $V_{12}$, $V_{23}$, $V_{34}$, $V_{14}$ is not empty.    
\end{center}

For this subcase, we will count triangles in $G$ in a different way than the previous subcases.  
Let 
\begin{center}
$X = \{ x_1 ,x_2,x_3,x_4 \}$, $S = V_{12} \cup V_{23} \cup V_{34} \cup V_{41}$,
and 
$T = V_{123} \cup V_{234} \cup V_{341} \cup V_{412}$.
\end{center}

\begin{claim}\label{X claim}
The number of triangles that contain at least one vertex in $X$ is at least $2n-6$.
\end{claim}
\begin{proof}[Proof of Claim \ref{X claim}]
There are four triangles that contain $x$, 
$2 |T|$ triangles that contain two vertices in $X$ and one in $T$,
and $|S|$ triangles that contain two vertices in $X$ and one in $S$.  
By Rule 3, a vertex $y_{12} \in V_{12}$ lies in a triangle 
of the form $y_{12} z x_2$ where $z$ is some vertex in $V_{23} \cup V_{234}$.  Similar 
statements hold for vertices in $V_{23}$, $V_{34}$, and $V_{41}$.  Altogether,
we have $4 + 2 |T| + 2|S| = 4 +2 ( n - 5) = 2n - 6$ triangles.
\end{proof}

\bigskip

If $|S| = 4$, then we let $V_{ij} = \{ y_{ij} \}$ for $(i,j) \in \{ (1,2),(2,3),(3,4),(4,1) \}$. 
Since $G$ is $K_4$-free, Rule 5 implies that $S$ induces a $K_4 - e$.  
The two distinct cases, up to symmetry, are the missing edge $e$ is 
$y_{12} y_{23}$ or $y_{12} y_{34}$.

Suppose first $e = y_{12} y_{34}$.  For any $y_{123} \in V_{123}$, we have 
that $y_{123} y_{34} y_{14}$ is a triangle.  Likewise,
$y_{234} \in V_{234}$ implies $y_{234} y_{12} y_{14}$ is a triangle,
$y_{341} \in V_{341}$ implies $y_{341} y_{23} y_{12}$ is a triangle,
and
$y_{412} \in V_{234}$ implies $y_{412} y_{34} y_{12}$ is a triangle.
The two triangles $y_{12} y_{23} y_{34}$ and $y_{12} y_{41} y_{34}$ have no vertex in $X$.  The number
of triangles containing no vertex in $X$ is at least 
\[
2  + |T| = 2 + n -9 = n-7.
\]
Thus, by Claim \ref{X claim}, $G$ contains at least $3n -13 \geq 2n-2$ triangles.

Now suppose $e = y_{12} y_{23}$.  By Rule 3, $y_{12}$ has a neighbor 
in $V_{23} \cup V_{234} = \{ y_{23} \} \cup V_{234}$, but $y_{12}$ is not adjacent to 
$y_{23}$ so $V_{234} \neq \emptyset$.  Similarly, $y_{23}$ has a neighbor in $V_{12} \cup V_{412}$ 
but $y_{23}$ is not adjacent to $y_{12}$ so $V_{412} \neq \emptyset$.  
For any $y_{234} \in V_{234}$, $y_{234} y_{12} y_{41}$ is a triangle.
Likewise,
$y_{412} \in V_{412}$ implies $y_{412} y_{23} y_{34}$ is a triangle,
and
$y_{123} \in V_{123}$ implies $y_{123} y_{34} y_{41}$ is a triangle.
Therefore, there are at least 
\[
\gamma := 2 + |V_{234} | + |V_{412}| + |V_{123} | = 2 + |T| - |V_{314}| 
\]
triangles in $G$ with no vertex in $X$.  Since $V_{234}$ and $V_{412}$ are not empty,
\[
| V_{314} | \leq
 n - | X \cup \{x \} | - | S | - |V_{234} | - | V_{412} |
 \leq n - 5 - 4 - 1 -1 = n - 11.
\]
Hence, $\gamma \geq 2 + (n-9) - (n-11)  = 4$.  Combining this with Claim \ref{X claim} gives
\[
k_3 (G) \geq 2n-2.
\]

The final possibility is if at least one of the parts $V_{12}$, $V_{23}$, $V_{34}$, $V_{41}$ contains 
2 or more vertices.  Assume $|V_{12} | >1$ and let $y_{12}^1 , y_{12}^2 $ be distinct vertices in
$V_{12}$.  Let $y_{23} \in V_{23}$, $y_{34} \in V_{34}$, and $y_{41} \in V_{41}$.  
The set $\{ y_{12}^1 , y_{23} , y_{34} , y_{41} \}$ cannot form a $K_4$ and so by
Rule 5, this set of four vertices induces a $K_4-e$.  There are two triangles 
using these vertices.
Similarly, there are two triangles using the vertices 
$\{ y_{12}^2 , y_{23} , y_{34} , y_{41} \}$ and regardless of which
pair of vertices is not adjacent in this set, one of the triangles must contain
$y_{12}^2$.  Thus, we have 3 distinct triangles contained in $S$.  By Claim \ref{X claim},
\[
k_3 (G) \geq 2n - 3.
\]
This completes the proof of Lemma \ref{lemma 4 edges}.
\end{proof}

\bigskip

Combining Lemmas \ref{lemma 1 edge}, \ref{lemma 2 edges}, \ref{claim 3 edges}, 
and \ref{lemma 4 edges} implies Theorem \ref{main theorem}.


\section{Proof of Proposition \ref{lb 3}}\label{lb 3 section}

Let $s > 3$ and $t \geq 6 \binom{s-2}{2}$ be integers.  
Suppose $G$ is an $n$-vertex $K_s$-saturated graph
with minimum degree $t$ where $n \geq 2s -2$.  We must show that $G$ has at least 
$\binom{s-2}{2} (n -2)$ triangles. 

First assume every edge of $G$ lies in a triangle.  If $t(e)$ is the number of triangles that contain the edge $e$,
then the number of triangles in $G$ is 
\begin{eqnarray*}
\frac{1}{3} \sum_{e \in E(G) } t(e) \geq \frac{e(G)}{3} \geq \frac{tn}{6} \geq  \binom{s-2}{2} n.
\end{eqnarray*}

Now assume there is an edge $xy$ in $G$ that does not lie in any triangle.  Let $A = N(x)$ and $B = N(y)$.
Because $xy$ is not in any triangle, $A \cap B = \emptyset$.  
If $a \in A$, then $N(y) \cap N(a)$ must contain a copy of $K_{s-2}$.
The number of triangles that contain $a$ and an edge from this $K_{s-2}$ 
is $\binom{s-2}{2}$.  The same argument applies to a vertex $b \in B$.
Let $C = V(G) \backslash ( \{x,y \} \cup A \cup B )$.  
If $c \in C$, then both $N(c) \cap N(x)$ and $N(c) \cap N(y)$ must 
contain a copy of $K_{s-2}$.  These two copies of $K_{s-2}$ cannot 
share any edges because $A \cap B = \emptyset$.  Thus, $G$ must contain at least
\[
|A| \binom{s-2}{2} + |B| \binom{s-2}{2} + |C| \cdot 2 \binom{s-2}{2} 
\geq \binom{s-2}{2}(n-2)
\]
triangles.  This completes the proof of Proposition \ref{lb 3 section}.  


\section{Appendix}

For $G_1'$, the edges in $N(x)$ are $x_1 x_2$ and $x_3 x_4$.

\smallskip

14 Triangles in $G_1'$: 
$xx_1 x_2$, $x x_3 x_4$, 
$y_{123}x_1 x_2$, $y_{124} x_1 x_2$, $y_{134} x_3 x_4$, $y_{234} x_3 x_4$, 
$y_{123}y_{134} x_1$, $y_{123} y_{134}x_3$,
$y_{123} y_{234} x_2$, $y_{123} y_{234}x_3$, 
$y_{124} y_{134} x_1$, $y_{124} y_{134} x_4$, 
$y_{124} y_{234} x_2$,  $y_{124} y_{234} x_4$

\vspace{2em}

\noindent
For $G_2'$ through $G_6'$, the edges in $N(x)$ are $x_1 x_2$, $x_2 x_3$, and $x_3 x_4$.  

\begin{center}

\begin{tikzpicture}[scale=.7]

\draw[dashed] (0,2) -- (1,0);
\draw (0,2) -- (3,0);
\draw[dashed]  (0,2) -- (4,2);
\draw[dashed] (0,2) -- (2,3);

\draw (1,0) -- (3,0);
\draw (1,0) -- (4,2);
\draw (1,0) -- (2,3);

\draw (2,3) -- (3,0);
\draw[dashed] (2,3) -- (4,2);

\draw[dashed] (3,0) -- (4,2);

\draw[fill=black] (0,2) circle (0.07cm);
\draw[fill=black] (1,0) circle (0.07cm);
\draw[fill=black] (2,3) circle (0.07cm);
\draw[fill=black] (3,0) circle (0.07cm);
\draw[fill=black] (4,2) circle (0.07cm);

\draw (-.1,2) node[left]{$V_{123}$};
\draw (1,-.1) node[below]{$V_{124}$};
\draw (3,-.1) node[below]{$V_{134}$};
\draw (4.1,2) node[right]{$V_{234}$};
\draw (2,3.1) node[above]{$V_{23}$};

\end{tikzpicture}

\end{center}

11 Triangles in $G_2'$: $xx_1x_2$, $xx_2 x_3$, $x x_3 x_4$, $y_{23} x_2 x_3$, 
$z_{124} x_1 x_2$, $z_{134} x_3 x_4$,
$y_{23}z_{124}x_2$,
\\ $y_{23} z_{134}x_3$, $z_{124}z_{134}x_1$,
$z_{124}z_{134}x_4$,
$y_{23} z_{124} z_{134}$

\medskip

14 Triangles in $G_3'$: $xx_1x_2$, $xx_2 x_3$, $x x_3 x_4$, $y_{23} x_2 x_3$, 
$z_{124} x_1 x_2$, $z_{124}' x_1 x_2$,
$z_{234} x_2 x_3$, \\
$z_{234} x_3 x_4$,
$y_{23}z_{124}x_2$, $y_{23} z_{124}'x_2$, 
$z_{124}z_{234} x_2$, $z_{124} z_{234} x_4$, $z_{124}' z_{234} x_2$, $z_{124}' z_{234} x_4$
  
\medskip

15 Triangles in $G_4'$: $xx_1x_2$, $xx_2 x_3$, $x x_3 x_4$, 
$y_{123}x_1 x_2$, $y_{123} x_2 x_3$, $y_{124} x_1 x_2$,
$y_{234} x_2 x_3$, $y_{234} x_3 x_4$,
$y_{134} x_3 x_4$, 
$y_{123}x_1 y_{134}$, $y_{123} x_3 y_{134}$,
$y_{124} x_1 y_{134}$, $y_{124} x_4 y_{134}$,
$y_{124} x_2 y _{234} , y_{124} x_4 y_{234}$

\medskip

8 Triangles in $G_5'$: $xx_1x_2$, $xx_2 x_3$, $x x_3 x_4$, 
$y_{123}x_1 x_2$, $y_{123} x_2 x_3$, 
$y_{134} x_3 x_4$, 
$y_{123} y_{134} x_1$, $y_{123} y_{134} x_3$

\medskip

7 Triangles in $G_6'$: $xx_1x_2$, $xx_2 x_3$, $x x_3 x_4$, 
$y_{124} x_1 x_2$, $y_{134} x_3 x_4$, $y_{124} y_{134} x_1$, $y_{124} y_{134} x_4$

\vspace{2em}

\noindent
For $G_7'$ through $G_{12}'$, the edges in $N(x)$ are $x_1 x_2$, $x_2 x_3$, $x_3 x_4$, and $x_4 x_1$.  

\begin{center}

\begin{tikzpicture}[scale=.6]

\draw[dashed] (0,4) -- (4,0);
\draw[dashed] (0,4) -- (3,3);
\draw (0,4) -- (5,3);
\draw (0,4) -- (8,4);
\draw (0,4) --(5,5);
\draw[dashed] (0,4) -- (3,5);
\draw[dashed] (0,4) -- (4,8);

\draw[dashed] (4,0) -- (3,3);
\draw (4,0) -- (3,5);
\draw (4,0) -- (4,8);
\draw (4,0) -- (5,5);
\draw[dashed] (4,0) -- (5,3);
\draw[dashed] (4,0) -- (8,4);

\draw[dashed] (8,4) -- (5,3);
\draw[dashed] (8,4) -- (2,4);
\draw (8,4) -- (3,3);
\draw (8,4) -- (3,5);
\draw[dashed] (8,4) -- (4,8);
\draw[dashed] (8,4) -- (5,5);

\draw[dashed] (4,8) -- (3,5);
\draw (4,8) -- (3,3);
 \draw (4,8) -- (5,3);
\draw[dashed] (4,8) -- (5,5);

\draw (0,4) node[left]{$V_{412}$};
\draw (4,8) node[above]{$V_{123}$};
\draw (8,4) node[right]{$V_{234}$};
\draw (4,0) node[below]{$V_{341}$};

\draw (2.55,2.6) node{$V_{41}$};
\draw (2.55,5.4) node{$V_{12}$};
\draw (5.45,5.4) node{$V_{23}$};
\draw (5.45,2.6) node{$V_{34}$};

\end{tikzpicture}

\end{center}

\medskip

10 Triangles in $G_7'$: $xx_1x_2$, $xx_2 x_3$, $x x_3 x_4$, $x x_4 x_1$, 
$y_{123} x_1 x_2$, $y_{123} x_2 x_3$, $y_{134} x_3 x_4$, $y_{134} x_4 x_1$,
$y_{123}y_{134} x_1$, $y_{123} y_{134} x_3$

\medskip

16 Triangles in $G_8'$: $xx_1x_2$, $xx_2 x_3$, $x x_3 x_4$, $x x_4 x_1$,
$y_{123} x_1 x_2$, $y_{123} x_2 x_3$, 
$y_{412} x_1 x_2$, $y_{412} x_4 x_1$, 
$y_{234} x_2 x_3$, $y_{234} x_3 x_4$,
$y_{134} x_3 x_4$, $y_{134} x_4 x_1$,
$y_{123} y_{134} x_1$, $y_{123} y_{134} x_3$,
$y_{412} y_{234} x_2$, $y_{412} y_{234} x_4$

\medskip

11 Triangles in $G_9'$: $xx_1x_2$, $xx_2 x_3$, $x x_3 x_4$, $x x_4 x_1$,
$y_{12} x_1 x_2$,
$y_{134} x_3 x_4$, $y_{134} x_4 x_1$,
$y_{234} x_2 x_3$, $y_{234} x_3 x_4$,
$y_{12} y_{134} x_1$,
$y_{12} y_{234} x_2$

\medskip

12 Triangles in $G_{10}'$: $xx_1x_2$, $xx_2 x_3$, $x x_3 x_4$, $x x_4 x_1$,
$y_{12} x_1 x_2$,
$y_{23}x_2x_3$,
$y_{134} x_3 x_4$, $y_{134} x_4 x_1$,
$y_{12} y_{134} x_1$, 
$y_{23} y_{134} x_1$,
$y_{12} y_{23} x_2$, 
$y_{12} y_{23} y_{134}$

\medskip

22 Triangles in $G_{11}'$: $xx_1x_2$, $xx_2 x_3$, $x x_3 x_4$, $x x_4 x_1$,
$y_{12} x_1 x_2$, $y_{34} x_3 x_4$,
$y_{123} x_1 x_2$, $y_{123} x_2 x_3$,
$y_{412} x_4 x_1$, $y_{412} x_1 x_2$,
$y_{234} x_2 x_3$, $y_{234} x_3 x_4$,
$y_{341} x_3 x_4$, $y_{341} x_4 x_1$,
$y_{12} y_{134} x_1$, $y_{12}y_{234} x_2$,
$y_{34} y_{124} x_4$, $y_{34} y_{123} x_3$,
$y_{134} y_{123} x_1$, $y_{134} y_{123}x_3$,
$y_{234} y_{412} x_2$, $y_{234} y_{412} x_4$

\medskip

15 Triangles in $G_{12}'$:  $xx_1x_2$, $xx_2 x_3$, $x x_3 x_4$, $x x_4 x_1$,
$y_{12} x_1 x_2$, $y_{23} x_2 x_3$,
$y_{34} x_3 x_4$, 
$y_{124} x_1 x_2$, $y_{124} x_4 x_1$, 
$y_{12} y_{23} x_2$, $y_{23} y_{34} x_3$, 
$y_{124} y_{23} x_2$, $y_{124} y_{34} x_4$,
$y_{12} y_{23} y_{34}$,
$y_{124} y_{23} y_{34}$


\begin{thebibliography}{10}

\bibitem{aehk}
N.\ Alon, P. Erd\H{o}s, R.\ Holzman, M.\ Krivelevich,
On $k$-saturated graphs with restrictions on the degrees,
{\em J.\ Graph Theory} 23 (1996), no.\ 1, 1--20.

\bibitem{as}
N.\ Alon, C.\ Shikhelman,
Many $T$ copies in $H$-free graphs, 
{\em J.\ Combin.\ Theory Ser}.\ B {\bf 121} (2016), 146--172.  

\bibitem{afg}
K.\ Amin, J.\ Faudree, R.\ Gould, 
The edge spectrum of $K_4$-saturated graphs,
{\em J.\ Combin.\ Math.\ Combin.\ Comput}.\ {\bf 81} (2012), 233--242.  

\bibitem{bol65}
B. Bollob\'{a}s,
On generalized graphs,
{\em Acta Math.\ Acad.\ Sci.\ Hungar}.\ {\bf 16} 1965 447--452.

\bibitem{day}
A.\ N.\ Day,
Saturated graphs of prescribed minimum degree,
{\em Combin. Probab.\ Comput}.\ 26 (2017), no.\ 2, 201--207.

\bibitem{dh}
D.\ Duffus, D.\ Hanson,
Minimal $k$-saturated and color critical graphs of 
prescribed minimum degree,
{\em J.\ Graph Theory} 10 (1986), no.\ 1, 55--67.

\bibitem{ehm}
P.\ Erd\H{o}s, A.\ Hajnal, J.\ W.\ Moon,
A problem in graph theory,
{\em Amer.\ Math.\ Monthly} {\bf 71} 1964 1107--1110.

\bibitem{ergemlidze}
B.\ Ergemlidze, E.\ Gy\"{o}ri, A.\ Methuku, N.\ Salia, E.
A note on the maximum number of triangles in a $C_5$-free graph,
{J.\ Graph Theory} 90 (2019), no.\ 3, 227--230.


\bibitem{sat survey}
J.\ R.\ Faudree, R.\ J.\ Faudree, and J.\ R.\ Schmitt,
A survey of minimum saturated graphs, 
{\em Electron. J. Combin.}, DS19, (2011).

\bibitem{gerbner}
D.\ Gerbner, E.\ Gy\"{o}ri, A.\ Methuku, M.\ Vizer,
Generalized Tur\'{a}n problems for even cycles,
arXiv:1712.07079 Dec 2018.

\bibitem{gmv}
D.\ Gerbner, A.\ Methuku, M.\ Vizer,
Asymptotics for the Tur\'{a}n number of Berge-$K_{2,t}$,
{\em J. Combin.\ Theory Ser.\ B}, to appear.  

\bibitem{shapira}
L.\ Gishboliner, A.\ Shapira, A generalized Tur\'{a}n problems and 
its applications,
{Proceedings of the 50th Annual ACM SIGACT Symposium 
on Theory of Computing}, 760--772, ACM, New York, 2018.  
Results and open problems on minimum saturated hypergraphs,
{\em Ars Combin}.\ 72 (2004), 111--127.

\bibitem{graham}
R.\ L.\ Graham, M.\  Gr\"{o}tschel, L.\  Lov\'{a}sz, editors,
Handbook of Combinatorics, Vol.\ 2
Elsevier Science B.V., Amsterdam; MIT Press, Cambridge, MA, 1995.


\bibitem{kt}
L.\ K\'{a}szonyi, Zs.\ Tuza,
Saturated graphs with minimal number of edges, 
{\em J.\ Graph Theory} 10 (1986), no.\ 2, 203--210.  

\bibitem{kmtt}
J.\ Kritschgau, A.\ Methuku, M.\ Tait, C.\ Timmons,
Few $T$-copies in $H$-saturated graphs,
arXiv:1810:00939v1 Oct 2018.  

\bibitem{ks}
D.\ Kor\'{a}ndi, B.\ Sudakov,
Saturation in random graphs,
{\em Random Structures and Algorithms} 51 (2017), no.\ 1, 169--181.

\bibitem{ma}
J.\ Ma, Y.\ Qiu,
Some sharp results on the generalized Tur\'{a}n numbers,
arXiv:1802.01091 Feb 2018.


\bibitem{pttw}
C.\ Palmer, M.\ Tait, C.\ Timmons, A.\ Zsolt Wagner,
Tur\'{a}n problems for Berge-hypergraphs and related extremal problems,
{\em Discrete Math}.\ 342 (2019), no.\ 6, 1553--1563.



\bibitem{pikhurko2}
O.\ Pikhurko,
The minimum size of saturated hypergraphs,
{\em Combin.\ Probab.\ Comput}.\ 8 (1999), no.\ 5, 483--492.

\bibitem{pikhurko}
O.\ Pikhurko,
Results and open problems on minimum saturated hypergraphs,
{\em Ars Combin}.\ 72 (2004), 111--127.

\end{thebibliography}
\end{document}